\documentclass[11pt]{amsart}
\usepackage{amsmath}
\usepackage{amssymb}
\usepackage{amsthm}
\usepackage{mathrsfs}
\usepackage{comment}
\usepackage{hyperref}
\usepackage{algorithm,algpseudocode}
\algblock{Input}{EndInput}
\algnotext{EndInput}
\algblock{Output}{EndOutput}
\algnotext{EndOutput}

\usepackage[all,cmtip]{xy}\usepackage{xcolor}
\usepackage{enumerate}
\usepackage{bm}
\usepackage{dsfont}
\usepackage{mathtools}
\usepackage[cal=euler]{mathalfa}
\usepackage[top=1in, bottom=1.25in, left=1.25in, right=1.25in]{geometry}
\usepackage{parskip}

\hypersetup{colorlinks=true,linkcolor=magenta,citecolor=blue}

\DeclareMathAlphabet{\mathpzc}{OT1}{pzc}{m}{it}

\theoremstyle{plain}
\newtheorem{theorem}{Theorem}[section]
\newtheorem*{theorem*}{Theorem}

\newtheorem{lemma}[theorem]{Lemma}
\newtheorem*{claim*}{Claim}
\newtheorem{proposition}[theorem]{Proposition}

\newtheorem{corollary}[theorem]{Corollary}

\theoremstyle{definition}
\newtheorem{definition}[theorem]{Definition}
\newtheorem{defn}[theorem]{Definition}

\newtheorem{example}[theorem]{Example}

\newtheorem{remark}[theorem]{Remark}

\numberwithin{equation}{section}
\numberwithin{figure}{section}

\newcommand{\ZZ}{\mathbb{Z}}
\newcommand{\bbZ}{\mathbb{Z}}

\newcommand{\val}{v}
\newcommand{\RR}{\mathcal{R}} 
\newcommand{\PP}{\mathcal{P}} 

\newcommand{\intR}{\mathfrak{o}}

\newcommand{\normEF}{N_{E/F}}

\newcommand{\normKE}{N_{K/E}}
\newcommand{\normKF}{N_{K/F}}
\newcommand{\normKFim}{N_{K/F}(K^\times)}

\newcommand{\Kone}[1]{K[t;\sigma]/K[t;\sigma](t^{\de}-#1)}

\DeclareMathOperator{\id}{id}
\newcommand{\os}{n}  
\newcommand{\de}{m}  
\newcommand{\Q}{\mathbb{Q}}
\newcommand{\N}{\mathbb{N}}
\newcommand{\F}{\mathbb{F}}
\newcommand{\Z}{\mathbb{Z}} 

\DeclareMathOperator{\Gal}{Gal}
\DeclareMathOperator{\Aut}{Aut}

\newcommand{\prootF}{\xi_0}
\newcommand{\prootK}{\xi}
\newcommand{\itemref}[1]{\textbf{\textcolor{magenta}{P}\ref{List:#1}}}
\newcommand{\gcdesn}{[es]_{\frac{q-1}{n}}}
\newcommand{\Thetaes}{[es]_{q-1}}

\begin{document}
\title{A parametrization of nonassociative Petit algebras $K[t;\sigma]/K[t:\sigma](t^m-a)$ up to equivalence}

\author{Monica Nevins}
\address{Department of Mathematics and Statistics, University of Ottawa, Ottawa, Canada K1N 6N5}
\email{mnevins@uottawa.ca}
\thanks{The first author's research is supported by NSERC Discovery Grant RGPIN-2025-05630.}

\author{Susanne Pumpl\"un}
\address{School of Mathematical Sciences, University of Nottingham,
Nottingham NG7 2RD}
\email{Susanne.Pumpluen@nottingham.ac.uk}
\thanks{Corresponding author: Susanne Pumpl\"un. Email: \href{mailto:Susanne.Pumpluen@nottingham.ac.uk}{Susanne.Pumpluen@nottingham.ac.uk}}

\keywords{Petit algebras, nonassociative algebras, skew constacyclic codes, LWE}

\subjclass[2020]{Primary 17A35; Secondary 17A60, 17A99, 11T71}

\date{\today}

\begin{abstract}
Let $K/F$ be a cyclic Galois field extension of degree $n$ with
$\mathrm{Gal}(K/F)=\langle\sigma\rangle$. The nonassociative Petit algebras
$K[t;\sigma]/K[t;\sigma](t^m-a)$ have recently emerged as important tools in coding theory and cryptography, where the applications only require distinct algebras up to isomorphism. Algebras that are isomorphic via a monomial isomorphism of degree one are called equivalent.  We present a novel and precise parametrization of the $F$-equivalence classes of these algebras in the key case that $F$ contains a primitive $n$th root of unity and $n\mid m$. We evaluate the parametrization explicitly when $F$ is a nonarchimedean local field, which has applications to the number field case recently used to construct a nonassociative variant of Learning with Errors.
\end{abstract}

\maketitle

\section{Introduction}

Let  $K/F$ be a cyclic Galois field extension of degree $n$ and let $\sigma$ generate
$\mathrm{Gal}(K/F)$.
Petit algebras are defined using skew polynomials and were discovered by Petit  \cite{Petit1967}: For a monic skew polynomial $f\in K[t;\sigma]$ of degree $m>1$, a Petit algebra
$\mathbb S_f=K[t;\sigma]/K[t;\sigma]f$ is a unital nonassociative $F$-algebra  of dimension $mn$ whose multiplication is defined via
right division by $f$.

Petit algebras appear in two applications.  Petit algebras with zero divisors serve as
ambient rings for skew polycyclic codes, and their
isomorphisms are used to classify such codes up to equivalence and isometry
\cite{Pumpluen2025, NevinsPumpluen2025submitted}.
Independently, Mendelsohn and Ling \cite{MendelsohnLing2025} used orders in
  $\mathbb{S}_{t^m-a}$ to construct a structured
variant of Learning with Errors (LWE), a key problem in modern cryptography,   when $m=n$, $a\in K\smallsetminus F$  and $K$ is a number field.
A full
search-to-decision reduction without restriction to the secret space was achieved. In contrast, using orders in associative cyclic division algebras, does  require restriction on the secret space (see cyclic LWE in \cite{Groveretall2022}).  It is therefore natural to ask whether more general proper nonassociative algebras $\mathbb S_{t^m-a}$   can be used in LWE as well, and a parametrization up to certain isomorphisms tells us which choices of $\sigma$ and $a\in K^\times$ yield genuinely different algebras.

In this paper we focus on the class of proper nonassociative $F$-algebras $\mathbb{S}_{t^m-a}$ where $n=[K:F]$ divides $m$ and $a\notin F$.  Two such algebras with distinct $K$ are never isomorphic (Lemma~\ref{Lem:differentextensions}) and if $\Gal(K/F)$ is abelian, then the results of \cite[Theorem 3.3]{Pumpluen2025closerlook} show that distinct choices of generators $\sigma$ of $\Gal(K/F)$ yield non-isomorphic algebras as well.   Thus it remains to parametrize the proper nonassociative algebras up to isomorphism, for fixed choices of $K$ and $\sigma$. Those isomorphisms that are given by monomial isomorphisms fixing the base field $F$ are called $F$-isometries. In prior work \cite{NevinsPumpluen2025submitted} we have established when all $F$-isometries are of degree one (which we call $F$-equivalences), that is, by isomorphisms $G_{\tau,\alpha}$ for some $\tau\in \Gal(K/F)$ and $\alpha\in K^\times$ such that $G(ct)=\tau(c)\alpha t$.  By the results of \cite{NevinsPumpluen2025submitted}, when $m=ns$ then two algebras $\mathbb{S}_{t^m-b}$ and $\mathbb{S}_{t^m-a}$ are $F$-equivalent
if and only if there exist $\alpha\in K^\times$ and $\tau\in \Gal(K/F)$ such that $$\tau(a) =  N_{K/F}(\alpha)^sb.$$ In Theorem~\ref{T:parametrizingnm}, we prove that this implies that  the $F$-equivalence
 classes correspond to $\Gal(K/F)$ orbits on $K^\times/\normKFim^s$. Our aim is to make this description explicit.

Our  main result gives such a parametrization when $F$ contains a primitive $n$th
root of unity, so that $K/F$ is a Kummer extension. Writing elements of $K$ in a Kummer
basis $1,\beta,\dots,\beta^{n-1}$ with $\sigma(\beta)=\zeta\beta$, we construct an
explicit subset $\mathcal{S}_s(K)\subset K^\times$ (Definition~\ref{D:SsK}) which achieves this goal.

\begin{theorem*} (Theorem \ref{Main:general})
Suppose $F$ contains a  primitive $n$th root of unity $\zeta$ and that $m=ns$ for some integer $s$.  Let $t_s>0$ be the least positive integer such that $\zeta^{t_s}\in N_{K/F}(K^\times)^s$.
 Then for fixed $K$ and $\sigma$, the distinct
 $F$-equivalence classes of the  Petit algebras $\mathbb{S}_{t^m-a}$
with  $a\in K^\times$ are represented by $a \in \mathcal{S}_s(K)$.
\end{theorem*}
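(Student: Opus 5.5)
We will reduce the statement to an orbit count and then check that $\mathcal{S}_s(K)$ meets each orbit exactly once. By the result of \cite{NevinsPumpluen2025submitted} quoted above (made precise in Theorem~\ref{T:parametrizingnm}), $\mathbb{S}_{t^m-a}$ and $\mathbb{S}_{t^m-b}$ are $F$-equivalent if and only if $\tau(a)\in \normKF(\alpha)^s b$ for some $\tau\in\Gal(K/F)$ and $\alpha\in K^\times$. So the $F$-equivalence classes are exactly the $\Gal(K/F)$-orbits on $K^\times/\normKFim^s$; this action is well defined because $\normKFim^s\subset F^\times$ is fixed pointwise by $\Gal(K/F)$. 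It therefore suffices to show two things: every such orbit contains an element of $\mathcal{S}_s(K)$, and two distinct elements of $\mathcal{S}_s(K)$ lie in different orbits.

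Next we describe the Galois action in the Kummer basis. Write $K=F(\beta)$ with $\beta^n=c\in F^\times$ and $\sigma(\beta)=\zeta\beta$. An element $a=\sum_i a_i\beta^i$ satisfies $\sigma^j(a)=\sum_i \zeta^{ij}a_i\beta^i$. Modulo $\normKFim^s$, multiplying $a$ by $\lambda\in\normKFim^s$ scales every coordinate $a_i$ by the same factor $\lambda$. The orbit of $a$ is therefore governed by the coordinate vector $(a_i)$ up to a common scalar in $\normKFim^s$, together with the twisting $a_i\mapsto \zeta^{ij}a_i$. Here $t_s$ enters: $\zeta^{t_s}\in\normKFim^s$, so the twist by $\zeta^{ij}$ can sometimes be absorbed into the common scalar. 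This happens exactly when the ratios $\zeta^{ij}$ are congruent to a constant modulo $\langle\zeta^{t_s}\rangle$, namely $\zeta^{ij}\equiv\zeta^{i_0j}$ for the relevant indices $i$. Accordingly, the stabilizer of the class of $a$ in $\langle\sigma\rangle$ will be $\langle\sigma^{d}\rangle$, where $d$ is determined by $t_s$ and by the gcd of the differences of indices $i$ with $a_i\neq 0$.

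We then match this with Definition~\ref{D:SsK}. Presumably $\mathcal{S}_s(K)$ normalizes $a$ in two steps: first a distinguished nonzero coordinate, say the first one, is taken from a fixed set of representatives of $F^\times/\normKFim^s$; then a second nonzero coordinate is taken from a transversal for the residual action of $\langle\zeta\rangle$ modulo $\langle\zeta^{t_s}\rangle$. For existence, given $a$ we first use the scalar $\lambda$ to normalize the leading nonzero coordinate. We then apply a suitable $\sigma^j$ to move the next relevant coordinate into the transversal, rescaling by a power of $\zeta^{t_s}$ if needed so that the leading coordinate is unchanged. For uniqueness, suppose $\tau(a)=\lambda b$ with $a,b\in\mathcal{S}_s(K)$. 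Comparing supports shows that $a$ and $b$ have the same nonzero indices. Comparing the leading coordinates then forces $\zeta^{i_0j}\lambda$ to lie in $\normKFim^s$ relative to the fixed representatives, and the transversal condition then forces $a=b$.

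The main obstacle is the uniqueness step. The twist $\zeta^{ij}$ depends on the index $i$, so absorbing it into a common scalar is possible only modulo $\langle\zeta^{t_s}\rangle$ and only compatibly across all nonzero indices. I would handle this by reducing the exponents $ij$ modulo $t_s$ (as a divisor of $n$) and working in the cyclic group $\langle\zeta\rangle/\langle\zeta^{t_s}\rangle$ of order $t_s$. I would treat separately the case where $a$ has a single nonzero coordinate, where the whole Galois group acts by scalars, and the general case. I expect the bookkeeping with $\gcd$'s of index differences and $t_s$ to be the delicate part. It must match exactly the congruence conditions built into Definition~\ref{D:SsK}.
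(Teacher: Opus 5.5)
Your reduction to $\Gal(K/F)$-orbits on $K^\times/\normKFim^s$ and your use of the support $I$ as an invariant match the paper. The core of the proof, however, is exhibiting a transversal and verifying it, and there you only guess the shape of $\mathcal{S}_s(K)$. The guess fails. Suppose the leading coefficient $a_{i_0}$ is taken from a full set of representatives of $F^\times/\normKFim^s$. Then uniqueness breaks, because the Galois group moves the class of the leading coefficient: $\sigma^j$ multiplies $a_{i_0}$ by $\zeta^{i_0j}$, which need not lie in $\normKFim^s$.

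A concrete failure: take $I=\{1\}$ with $\zeta\notin\normKFim^s$, and representatives including $1$ and $\zeta$. Then both $\beta$ and $\zeta\beta$ are in your set. But $\zeta\beta=\sigma(\beta)$, so they are equivalent via $\tau=\sigma$, $\alpha=1$.

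The missing step is exactly where $\gcd(i_0,t_s)$ in Definition~\ref{D:SsK} comes from. Since $t_s\mid n$, we have $\langle\zeta^{i_0},\zeta^{t_s}\rangle=\langle\zeta^{\gcd(i_0,t_s)}\rangle$. So the leading coefficient can only be normalized modulo $\langle\zeta^{\gcd(i_0,t_s)}\rangle\normKFim^s$, with representatives $d\zeta^\ell$ for $d\in\mathcal{D}_s$ and $0\le \ell<\gcd(i_0,t_s)$. The paper does this by solving $i+xi_0=\ell+yt_s$, applying $\sigma^x$, and dividing by $(\zeta^{t_s})^y$.

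Your treatment of the remaining coordinates also falls short. Once the leading coefficient is fixed, the residual symmetries are exactly the pairs $(\sigma^z,\zeta^{-zi_0})$ with $t'=t_s/\gcd(i_0,t_s)$ dividing $z$. These act on the whole tail vector $(a_{i_1},\dots,a_{i_k})$ by $(\zeta^{z(i_1-i_0)},\dots,\zeta^{z(i_k-i_0)})$, that is, through the cyclic group $\Delta_{t_s,I}$. A transversal must therefore be taken for $F^{[\times k]}/\Delta_{t_s,I}$ on the full vector. Normalizing only ``a second nonzero coordinate'' is not enough when $\Delta_{t_s,I}$ acts trivially on that coordinate but nontrivially on a later one.

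Relatedly, your stabilizer criterion is misstated. The element $\sigma^j$ fixes the class of $a$ modulo $\normKFim^s$ exactly when two conditions hold. First, the values $\zeta^{ij}$ must be equal for all $i\in I$, i.e. $n\mid j(i-i_0)$; congruence modulo $\langle\zeta^{t_s}\rangle$ is not enough. Second, the common value $\zeta^{i_0j}$ must lie in $\langle\zeta^{t_s}\rangle$, i.e. $t'\mid j$. So the stabilizer depends on $\gcd(i_0,t_s)$, not only on $t_s$ and the index differences. With these corrections, your existence and uniqueness steps become the paper's argument.
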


For $m=n$ this yields a complete set of representatives of the $F$-isomorphism classes
of proper nonassociative cyclic algebras of arbitrary degree $n$
(Corollary~\ref{C:nonassociativecyclic}). This was previously known only for prime $n$
\cite{NevinsPumpluen2025}; note that we also correct a small omission in that result
(Remark~\ref{re:correction}).

Our parametrization also has a consequence for general Petit algebras. Let
$f(t)=t^m-\sum_{j=0}^{m-1}a_jt^j$ and $g(t)=t^m-\sum_{j=0}^{m-1}b_jt^j$ in
$K[t;\sigma]$, and suppose there is an $F$-equivalence
$G_{\tau,\alpha}: \mathbb S_f\to \mathbb S_g$. Then for each $j$ with $n\mid(m-j)$ and $a_j\neq 0$,
the coefficients $a_j$ and $b_j$ have the same support with respect to the Kummer basis,
that is, they lie in the same part $K(I_j)$, $I_j\in\mathcal{P}$, of the partition of $K$
from Definition~\ref{def:K(I)}.  (Corollary \ref{cor:generalcase}).

  To put our results into context, in \cite{Pumpluen2025closerlook}, the second author  counted the isomorphism classes of the Petit semifields $\mathbb{S}_{t^m-a}$  with respect to \emph{all} ring isomorphisms $G_{\tau,\alpha}$, $\tau\in\mathrm{Aut}(K)$  \cite[Theorem 5.7]{Pumpluen2025closerlook}, and  determined the number of isotopy classes of all
Petit semifields $\mathbb{S}_f$ as well \cite[Theorem 6.1, Theorem 6.2]{Pumpluen2025closerlook}.
In contrast, the present paper focuses on $F$-isomorphisms which are $F$-equivalences and gives explicit methods to construct a system of representatives.

In Section~\ref{sec:local} we evaluate $\mathcal{S}_s(K)$ explicitly when $F$ is a
nonarchimedean local field with residue field of order $q$, and $K/F$ is cyclic of
degree $n=ef$ with $n\mid(q-1)$ and $\gcd(m,p)=1$. All ingredients of the
parametrization are then given by closed formulas in $e$, $f$, $s$ and $q$, and the
representatives of the remaining finite quotients are produced by  Algorithm~\ref{Algorithm:1}.

The paper is organized as follows. Section~\ref{sec:prelim} collects the necessary
background on Petit algebras and their isomorphisms. Section~\ref{sec:main} contains the
parametrization for $n\mid m$,  and Section~\ref{sec:local} its evaluation over local nonarchimedean fields.
Our  results immediately give  necessary conditions on $F$-equivalences over number fields $F$ via completion (Section~\ref{sec:number fields}).

\section{A collection of terminology and previous results}\label{sec:prelim}

\subsection{Twisted polynomial rings}
Let  $K/F$ be a cyclic Galois field extension of degree $n$ with Galois group $G$ generated by $\sigma$.

 The \emph{twisted polynomial ring} $R=K[t;\sigma]$
is the ring of polynomials $\{a_0+a_1t+\dots +a_mt^m\,|\, a_i\in K\}$, with term-wise addition and multiplication given by the rule
$ta=\sigma(a)t$ for all $a\in K$ \cite{Ore1933}; see also \cite[Chapter I]{Jacobson1996}.  The units of $R$ are  the constant nonzero polynomials $K^\times$.

For $f=a_0+a_1t+\dots +a_mt^m\in R$ with $a_m\not=0$ define ${\rm
deg}(f)=m$ and put ${\rm deg}(0)=-\infty$. Then ${\rm deg}(fg)={\rm deg}
(f)+{\rm deg}(g).$
 An element $f\in R$ is called \emph{irreducible} if  it is not a unit and it has no proper factors, \emph{i.e.} there do not exist
 $g,h\in R$, neither a unit, such that $f=gh$.
 There exists a right division algorithm in $R=K[t;\sigma]$: for all $g,f\in R$, $f\neq 0$, there exist unique $r,q\in R$
  such that ${\rm deg}(r)<{\rm deg}(f)$ and $g=qf+r$ \cite[\S1.1]{Jacobson1996}.

\subsection{Petit algebras, nonassociative cyclic algebras and Sandler's semifields} \label{subsec:nonass}

Let $f \in K[t;\sigma]$ have degree $m>1$ and let ${\rm mod}_r f$ denote the remainder of right division by $f$.
 The vector space $K[t;\sigma]_m=\{g\in K[t;\sigma]\,|\, {\rm deg}(g)<m\}$ together with the multiplication
 $g\circ h=gh \,\,{\rm mod}_r f $, where the right hand side is the remainder obtained after dividing $gh$ on the right by $f$,
 is a unital nonassociative algebra  over $F$ of dimension $mn$
  denoted by $K[t;\sigma]/K[t;\sigma]f$ or $\mathbb{S}_f$, and called a \emph{Petit algebra}, and a \emph{proper Petit algebra} when $\mathbb{S}_f$ is not associative.

  When ${\rm deg}(g)+{\rm deg}(h)<m$, the multiplication of $g$ and $h$ in $\mathbb{S}_f$ is thus the same as the multiplication of $g$ and $h$ in $R$. The algebra $\mathbb{S}_f$ is associative if and only if $f$ is \emph{two-sided},  (also called \emph{right-invariant}), meaning $(f)=Rf$ is a two-sided ideal.
In that case, $\mathbb{S}_f$ is the usual quotient algebra  $R/(f)$ \cite[(7),(9), (10)]{Petit1967}.

 If  $\mathbb{S}_f$ is a proper Petit algebra, then $\mathbb{S}_f$ has center $F$,
${\rm Nuc}_l(\mathbb{S}_f)={\rm Nuc}_m(\mathbb{S}_f)=K$
and
$${\rm Nuc}_r(\mathbb{S}_f)=\{g\in \mathbb{S}_f\,|\, fg\in Rf\}$$
 is the eigenspace of $f$ \cite{Petit1967}.

Let $a\in K^\times$.  The algebra $\mathbb{S}_{t^m-a}$
 is  associative if and only if $n\mid m$ and $a\in F^\times$ \cite{Petit1967}.  Thus we know that  $f(t)=t^m-a\in K[t;\sigma]$ is not two-sided, if and only if either $n\nmid m$ or  $ n\mid m$ but $a\in K\smallsetminus F$.

We remark that over finite fields, the proper nonassociative Petit division algebras are isotopic to the Jha-Johnson semifields (also called cyclic semifields) from \cite{JhaJohnson1989} and make up a huge class of semifields.
Indeed, for $n\geq m$, the proper nonassociative Petit division algebras $\F_{q^n}[t;\sigma]/\F_{q^n}[t;\sigma](t^m-a)$ are the opposite algebras of the well-known Sandler semifields \cite{Sandler1962}.

 In this paper we will investigate proper nonassociative Petit  algebras, where $f(t)=t^m-a\in K[t;\sigma]$ is not two-sided. The algebra $\mathbb{S}_{t^m-a}$ is a division algebra over $F$ if and only if $f(t)=t^m-a\in K[t;\sigma]$ is irreducible \cite[(7)]{Petit1967}. We also know that $\mathbb{S}_f$ is a division algebra if and only if  ${\rm Nuc}_r(\mathbb{S}_f)$ is a division algebra, e.g. see \cite[Proposition 4]{Gomez2014}.
For finite fields $K$ and for $n\geq m$, the opposite algebras of $\mathbb{S}_{t^m-a}$ are called \emph{Sandler's semifields} (if they are division algebras).

When $f(t)=t^m-a$ is irreducible has to be decided on a one-by-one basis.
When $m$ is prime we know more.

\begin{theorem} \cite[Theorem 3.11]{BrownPumpluen2018}\label{thm:prime}
Suppose that $m=2$ or $3$, or that $m\geq 5$ is a prime and that $F$ contains a primitive $m$th root of unity.
Then  $\mathbb{S}_{t^m-a}$ is a division algebra if and only if
$$\sigma^{m-1}(z)\cdots \sigma(z) z\neq a$$
for all $z\in K$.
\end{theorem}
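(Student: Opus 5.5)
The plan is to use Petit's characterization that $\mathbb{S}_f$ is a division algebra if and only if $f=t^m-a$ is irreducible in $R=K[t;\sigma]$, and to analyse reducibility through the degree-one right factors and the norm-type product $N_m(z)=\sigma^{m-1}(z)\cdots\sigma(z)z$.

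\emph{Necessity.} A direct computation in $R$ shows that
\[
t^m-N_m(z)=g(t)\,(t-z)
\]
for a suitable $g\in R$ of degree $m-1$. Indeed, right division of $t^m$ by $t-z$ leaves remainder $\sigma^{m-1}(z)\cdots\sigma(z)z$, by induction on $m$ using $t\cdot z=\sigma(z)t$. Hence if $a=N_m(z)$ for some $z\in K$, then $t-z$ is a proper right factor of $t^m-a$. So $f$ is reducible and $\mathbb{S}_f$ is not a division algebra.

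\emph{Sufficiency.} Assume $a\neq N_m(z)$ for all $z$. By the remainder computation above, $f$ then has no right linear factor. The task is to exclude factorizations $f=gh$ with $1<\deg h<m$.

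\begin{itemize}
\item Since $m$ is prime, $\deg h$ is not a multiple of any proper divisor structure. The natural tool is the bound $R$-module $R/Rf$ and the structure of $t^m-a$ as a skew polynomial whose bound (two-sided multiple) is a polynomial in $t^{n'}$, or more simply a polynomial in $t^{\mathrm{lcm}(m,n)}$.
\item For $m=2$: a factorization of degree $1\cdot 1$ automatically gives a linear right factor, so the claim is immediate.
\item For $m=3$: any nontrivial factorization $3=1+2$ or $2+1$ has a linear factor on one side. A linear left factor $t-w$ yields a linear right factor via the anti-automorphism/adjoint trick: consider $f$ in $K[t;\sigma^{-1}]$, or use that $t^m-a=(t-w)g$ implies $g(t)(t-w')=t^m-a'$ with $a'$ related to $a$ by applying $\sigma$. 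It then remains to translate this back into $a=N_3(z')$ for some $z'$.
\item For prime $m\ge5$ with a primitive $m$th root of unity $\omega\in F$: the substitution $t\mapsto\omega t$ defines an $F$-automorphism of $R$ fixing $f$ up to the scalar $\omega^m=1$. Hence if $h$ is a monic right factor of $f$, so is every twist $h(\omega^i t)$. One then uses the unique-factorization-up-to-similarity theorem in $R$ (Jacobson) together with the fact that the bound of $f$ lies in the commutative ring $F[t^{\mathrm{lcm}(m,n)}]$. This shows that all irreducible factors of $f$ are similar and of common degree $d\mid m$. Since $m$ is prime and $f$ is reducible, we get $d=1$, so $f$ has a linear right factor (a similar irreducible factor of degree one can be moved to the right), which is a contradiction.
\end{itemize}

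The main obstacle is this last step, namely showing that the irreducible factors of $t^m-a$ all have the same degree. I would try to prove it through the $\omega$-twist symmetry plus the similarity classes, following Petit's/Jacobson's bound theory. Since this is \cite[Theorem 3.11]{BrownPumpluen2018}, one may in the end simply cite it.
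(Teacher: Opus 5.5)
\textbf{Verdict: genuine gap.} Necessity and the cases $m=2,3$ are fine, but for prime $m\geq 5$ the key step is asserted rather than proved, and it does not follow from the facts you list.

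The paper gives no argument of its own here; it only quotes the result from Brown--Pumpl\"un. So falling back on the citation matches the paper, but your sketch then proves nothing beyond $m=2,3$.

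\emph{What works.} Necessity is correct. For $m=3$ you still have to close the left-factor case, which takes one line. Left division gives $t^m-a=(t-w)q+\bigl(w\,\sigma^{-1}(w)\cdots\sigma^{-(m-1)}(w)-a\bigr)$, and $w\,\sigma^{-1}(w)\cdots\sigma^{-(m-1)}(w)=N^\sigma_m(\sigma^{1-m}(w))$. Hence $t^m-a$ has a linear left factor iff it has a linear right factor.

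\emph{The gap.} You claim that all irreducible factors of $f$ are similar. For that you need the bound of $f$ to be a power of a single irreducible central polynomial $\pi(t^n)$. Then every composition factor of $R/Rf$ is a module over the simple ring $R/R\pi(t^n)$, where $R=K[t;\sigma]$. Knowing that the bound lies in $F[t^{\mathrm{lcm}(m,n)}]$ gives nothing of the sort. For example, take $\gcd(m,n)=1$ and $N_{K/F}(a)=c^m$ with $c\in F$. Then the bound divides $\prod_{i=0}^{m-1}(t^n-\omega^ic)$. Irreducible factors of $f$ attached to different $t^n-\omega^ic$ have different bounds, so they are not similar. The step you flag as the main obstacle is in fact the whole content of the theorem.

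\emph{How to close it.} Split into the two cases $m\mid n$ and $\gcd(m,n)=1$. Put $r=n/\gcd(m,n)$. The remainder computation in the subring $K[t^m;\sigma^m]$ shows $t^{mr}-N^{\sigma^m}_r(a)\in Rf$.

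\emph{Case $m\mid n$.} Here $b:=N^{\sigma^m}_r(a)$ lies in the fixed field $K_0$ of $\sigma^m$, and $[K_0:F]=m$. Since $t^n$ commutes with $K$, $\prod_{j=0}^{m-1}(t^n-\sigma^j(b))=\chi_b(t^n)$ lies in $Rf$. Here $\chi_b$ is the characteristic polynomial of $b$ over $F$, which is a power of its minimal polynomial $\pi$. Every composition factor of $R/Rf$ is therefore a simple module over the simple Artinian ring $R/R\pi(t^n)$, so they are all isomorphic. Hence all irreducible factors of $f$ have a common degree $d\mid m$. If $f$ were reducible, $d=1$ would give a linear right factor, a contradiction. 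No root of unity is needed in this case.

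\emph{Case $\gcd(m,n)=1$.} Here $t^{mn}-N_{K/F}(a)\in Rf$.
\begin{itemize}
\item If $N_{K/F}(a)\notin(F^\times)^m$, then $x^m-N_{K/F}(a)$ is irreducible over $F$, and the same argument applies.
\item If $N_{K/F}(a)=c^m$, write $R/Rf=\bigoplus_{i=0}^{m-1}M_i$, where $M_i$ is the part killed by $t^n-\omega^ic$. The $K$-linear bijection of $R/Rf$ induced by $t\mapsto\omega t$ maps $M_i$ onto $M_{i-n}$, with indices mod $m$. Because $\gcd(m,n)=1$, this action is transitive, so every $M_i$ has $K$-dimension $1$. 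Then $M_0\cong R/R(t-z)$ is a quotient of $R/Rf$, so $t-z$ right divides $f$ and $a=N^\sigma_m(z)$, contradicting the hypothesis.
\end{itemize}
This last subcase is exactly where the hypothesis $\omega\in F$ is used.
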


\begin{lemma}\label{Lem:differentextensions} Let $K$ and $K'$ be two distinct cyclic Galois extensions of  $F$ of degree $n$.  Suppose that either $n\nmid m$ or that $ n\mid m$ but $a\in K\smallsetminus F$. Then for any two generators $\sigma,\sigma'$  of their respective Galois groups, and all $a'\in K'$
$$K[t;\sigma]/K[t;\sigma](t^m-a)\not\cong K'[t;\sigma']/K'[t;\sigma'](t^m-a').$$
\end{lemma}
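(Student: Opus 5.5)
The plan is to recover $K$, up to $F$-isomorphism, from the algebra structure alone, via the nucleus. Under either hypothesis, $t^m-a$ is not two-sided, so $A=K[t;\sigma]/K[t;\sigma](t^m-a)$ is a proper nonassociative Petit algebra. By the results of Petit recalled in Subsection~\ref{subsec:nonass}, we then have ${\rm Nuc}_l(A)={\rm Nuc}_m(A)=K$ and the center of $A$ is $F$. The same holds for $A'=K'[t;\sigma']/K'[t;\sigma'](t^m-a')$, provided $A'$ is also proper.

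First, suppose $A'$ is proper. Let $G\colon A\to A'$ be an isomorphism of $F$-algebras; these are the isomorphisms relevant here, since all algebras are viewed as $F$-algebras. Nuclei are defined purely in terms of associators, so $G$ maps ${\rm Nuc}_l(A)$ onto ${\rm Nuc}_l(A')$. Hence $G$ restricts to an $F$-algebra isomorphism $K\to K'$. We interpret "distinct" as "non-isomorphic over $F$" (equivalently, as distinct subfields of a fixed algebraic closure). Since $K/F$ and $K'/F$ are Galois, $F$-isomorphic extensions have the same image in $\overline F$, so they coincide. This contradicts the hypothesis.

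Second, suppose $A'$ is associative, which means $n\mid m$ and $a'\in F$. Then $A$ is not associative while $A'$ is. Associativity is preserved under ring isomorphism, so $A\not\cong A'$ in this case too.

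The main obstacle is to make sure the nucleus statement really applies, that is, that $t^m-a$ is not two-sided exactly under the stated hypotheses. This is the quoted criterion: $\mathbb{S}_{t^m-a}$ is associative if and only if $n\mid m$ and $a\in F$. A second point to handle is the meaning of "distinct." If the isomorphism is only assumed to be a ring isomorphism, it still preserves centers, so it maps $F$ onto $F$, possibly by a nontrivial automorphism of $F$. In that setting I would state the conclusion for $F$-linear isomorphisms, which is the setting of the paper.
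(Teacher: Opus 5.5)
Your proof is correct and follows the paper's argument, which is simply that isomorphic proper nonassociative algebras have isomorphic nuclei, here ${\rm Nuc}_l=K$ versus $K'$. Your separate treatment of an associative $A'$ fills in a case the paper leaves implicit. One small inaccuracy: the gloss ``which means $n\mid m$ and $a'\in F$'' misses $a'=0$, since $t^m$ is always two-sided. This is harmless, because your argument only uses that $A'$ is associative.
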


This is an immediate consequence of the fact that the nuclei of two isomorphic proper nonassociative algebras are the same.

In many cases the different generators of the Galois group yield different families of mutually nonisomorphic proper nonassociative algebras when $a_i\in K\smallsetminus F$.

\begin{theorem}\label{thm:2.3}
Suppose that $\sigma_1, \sigma_2$ are two distinct generators of $\Gal(K/F)$, and let $F_0\subset F$ be any subfield such that $\Aut_{F_0}(K)$ is abelian.
\begin{enumerate}[(i)]
\item \label{thm:main1}  \cite[Theorem 3.1]{Pumpluen2025closerlook} Suppose $n \nmid m$. Then for every $a_1,a_2\in K^\times$, there is no isomorphism of $F_0$-algebras
$$
K[t_1;\sigma_1]/K[t_1;\sigma_1](t_1^m-a_1) \rightarrow K[t_2;\sigma_2]/K[t_2;\sigma_2](t_2^m-a_2)
$$
\item \label{thm:main2}  \cite[Theorem 3.2]{Pumpluen2025closerlook}  Suppose $n=\de$.  Then for every $a_1 \in K^\times$ and $a_2\in K\smallsetminus F$, there is no isomorphism of $F_0$-algebras
    $$
K[t_1;\sigma_1]/K[t_1;\sigma_1](t_1^\de-a_1) \to K[t_2;\sigma_2]/K[t_2;\sigma_2](t_2^\de-a_2).
    $$
\end{enumerate}
    That is in both (i) and (ii), varying $\sigma$ over the distinct generators of $\Gal(K/F)$ gives a partition of the $F_0$-isomorphism classes of such algebras, for every such $F_0$.
\end{theorem}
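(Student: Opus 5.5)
The plan is to rigidify any isomorphism using the nuclei, and then play the commutation rule $tc=\sigma_i(c)t$ against the degree structure of the two algebras. Let $G$ be an $F_0$-algebra isomorphism from $A_1=K[t_1;\sigma_1]/K[t_1;\sigma_1](t_1^m-a_1)$ to $A_2=K[t_2;\sigma_2]/K[t_2;\sigma_2](t_2^m-a_2)$. In both (i) and (ii) the algebras are proper. In (i) this holds because $n\nmid m$; in (ii) it holds for $A_2$ because $a_2\notin F$, and $A_1$ must then be proper too, since it is isomorphic to $A_2$. Proper Petit algebras have ${\rm Nuc}_l={\rm Nuc}_m=K$, and an isomorphism preserves the left nucleus. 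Hence $G$ restricts to a ring automorphism $\tau$ of $K$, and $F_0$-linearity gives $\tau\in\Aut_{F_0}(K)$.

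Next I pin down the shape of $G(t_1)$. Write $G(t_1)=\sum_{i=0}^{m-1}k_it_2^i$ with $k_i\in K$. Apply $G$ to the identity $t_1c=\sigma_1(c)t_1$ for $c\in K$. Products of a degree-one element with elements of $K$ involve no reduction, so we may compare coefficients. This gives $k_i\sigma_2^i(\tau(c))=\tau(\sigma_1(c))k_i$ for every $i$. Therefore $k_i\neq0$ forces $\sigma_2^i=\tau\sigma_1\tau^{-1}$ on $K$. Because $\Aut_{F_0}(K)$ is abelian and $\sigma_1,\tau$ lie in it, this says $\sigma_2^i=\sigma_1$. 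Fix $j$ with $\sigma_2^j=\sigma_1$, where $1<j<n$ (here $j\not\equiv 1$ since $\sigma_1\ne\sigma_2$, and $j\not\equiv0$ since $\sigma_1\ne\id$). Then $G(t_1)=\sum_{i\equiv j \bmod n}k_it_2^i$, a sum supported in degrees congruent to $j$ modulo $n$. The same argument applied to $G^{-1}$ shows that $G^{-1}(t_2)$ is supported in degrees congruent to $j'$, where $\sigma_1^{j'}=\sigma_2$ and $jj'\equiv1 \pmod n$.

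The remaining step is to show that no such $G$ is multiplicative and bijective. In case (ii), $m=n$, so $G(t_1)=kt_2^j$ is a single monomial with $2\le j\le n-1$. Define the powers $t_1^{i}$ from the left, $t_1^{i}=t_1t_1^{i-1}$ for $i<n$. Inductively, $G(t_1^{i})$ is a single monomial $c_it_2^{ij \bmod n}$ whose coefficient picks up factors of $\sigma_2^{\ast}(a_2)$ at each wrap-around. I would then compute one product in two ways: $t_1^{n-1}\cdot t_1=a_1$ in $A_1$, while $t_1\cdot t_1^{n-1}$ gives $\sigma_1(\cdot)$-twisted data. Their images in $A_2$ involve different numbers of wrap-arounds, because $j\neq1$. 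Comparing them gives an equation of the form $\sigma_2^{r}(a_2)=\sigma_2^{r'}(a_2)$, or forces $\tau(a_1)$ to equal two different Galois conjugates of $a_2$, with $r\not\equiv r'$. This would imply $a_2\in F$, which is a contradiction. A cleaner alternative for the same step is to compare right nuclei (eigenspaces), which are explicitly computable monomial subspaces for $t^n-a$.

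In case (i), $n\nmid m$. I would argue that $G(t_1)^{\,i}$, computed inside $A_2$, stays supported in degrees $\equiv ij \pmod n$ up to reduction. Reduction by $t_2^m-a_2$ shifts degrees by $m\not\equiv0 \pmod n$, so it breaks this congruence pattern. Comparing degrees mod $n$ for the identity $t_1^{m-1}\cdot t_1=a_1\in K$, whose image must lie in degree $0$, should then force the relevant coefficients to vanish, contradicting bijectivity. I expect this bookkeeping of supports under reduction in case (i) to be the main obstacle, since $G(t_1)$ need not be a monomial there. The first thing I would try is a leading-term argument: track only the top-degree coefficient of $G(t_1)$ and show it must be $0$.
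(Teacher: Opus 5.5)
Your first two steps are sound: the nuclei force $G|_K=\tau\in\Aut_{F_0}(K)$, and the commutation relation together with commutativity of $\Aut_{F_0}(K)$ forces $G(t_1)$ into degrees $\equiv j \pmod n$ with $j\not\equiv 0,1$. The concluding step, however, fails in (ii) and is missing in (i).

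\emph{Case (ii).} The comparison you propose gives no contradiction. In $A_1$ one also has $t_1\cdot t_1^{n-1}=a_1$, because right division of $t_1^n$ by $t_1^n-a_1$ leaves $a_1$ whichever way the factors are ordered. In $A_2$, both $G(t_1)\cdot G(t_1^{n-1})$ and $G(t_1^{n-1})\cdot G(t_1)$ wrap around exactly once, since the exponents $j$ and $n-j$ sum to $n$. Carrying out your induction gives $G(t_1^{n-1})=\bigl(\prod_{u=0}^{n-2}\sigma_2^{uj}(k)\bigr)\,\sigma_1^{-1}(a_2)^{j-1}\,t_2^{n-j}$. The two products are then $N_{K/F}(k)\,a_2^{j}$ and $N_{K/F}(k)\,a_2\,\sigma_1^{-1}(a_2)^{j-1}$, so you only learn $(a_2/\sigma_1^{-1}(a_2))^{j-1}=1$. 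This can hold with $a_2\notin F$: take a Kummer extension with $n=4$, $\sigma_1=\sigma_2^{3}$, $\sigma_2(\beta)=\zeta\beta$ and $a_2=\beta^2$.

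The right-nucleus alternative also fails. One has $\mathrm{Nuc}_r(\mathbb{S}_{t^n-a})=\bigoplus_{\sigma^i(a)=a}Kt^i$, and $i\mapsto ij$ permutes the multiples of $[F(a):F]$ modulo $n$. So $G$ matches the right nuclei as soon as $[F(a_1):F]=[F(a_2):F]$.

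What works is a genuine associator $[x,y,z]=(xy)z-x(yz)$. Choose $1\le p\le n-2$ with $pj\equiv -1\pmod n$; this is possible because $j\not\equiv 1$.
\begin{itemize}
\item In $A_1$, $[t_1^p,t_1,t_1]=0$, since both bracketings give the reduction of $t_1^{p+2}$.
\item By your commutation argument applied to $t_1^p$, $G(t_1^p)=c\,t_2^{n-1}$ with $c\neq 0$.
\item Since $K$ lies in the left and middle nuclei, $[c\,t_2^{n-1},k\,t_2^{j},k\,t_2^{j}]$ is a nonzero left $K$-multiple of $[t_2^{n-1},t_2^{j},t_2^{j}]$.
\item A direct computation gives $[t_2^{n-1},t_2^{j},t_2^{j}]=\sigma_2^{j-1}\bigl(a_2-\sigma_1(a_2)\bigr)\,u\,t_2^{(2j-1)\bmod n}$ with $u\in K^\times$, and this is nonzero because $a_2\notin F$.
\end{itemize}
This contradicts $G$ being a homomorphism.

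\emph{Case (i).} Here the argument is left open. The missing idea is to apply your commutation argument to every $t_1^i$ with $i<m$, using $t_1^i c=\sigma_1^i(c)t_1^i$. Then each $G(t_1^i)$ is supported in degrees $\equiv ij\pmod n$.

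Right reduction by $t_2^m-a_2$ sends $d\,t_2^{l}$, for $m\le l\le 2m-2$, to $d\,\sigma_2^{l-m}(a_2)\,t_2^{l-m}$. This term is nonzero, lies in the class $ij-m\not\equiv ij$, and cannot cancel against anything else. Hence the product $G(t_1^{i-1})G(t_1)$, computed in $K[t_2;\sigma_2]$, must already have degree $<m$. By induction, $\deg G(t_1^{i})=iD$ for $D=\deg G(t_1)$.

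Since $G(t_1)\neq 0$ and $D\equiv j\not\equiv 0,1$, we get $2\le D\le m-1$. Taking $i=\lceil m/D\rceil\in[2,m-1]$ gives $iD\ge m$, a contradiction. This is the leading-term argument you anticipated.

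The identity $t_1^{m-1}\cdot t_1=a_1$ alone would not suffice. The reduced part of its image lies in the class $m(j-1)$, which can be $0$ modulo $n$ (e.g.\ $n=8$, $m=12$, $j=3$).
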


\begin{corollary}\label{cor:Euler} \cite[Corollary 3.3]{Pumpluen2025closerlook}
Let either $n \nmid m$ or, alternatively, let  $n =m $ and suppose that $a\in K\smallsetminus  F$.  Let $\varphi$ be the Euler function. Then there are $\varphi(n)$ different classes of proper Petit algebras of type $K[t;\sigma_i]/K[t;\sigma_i](t^m-a)$; one class for each choice of generator $\sigma_i$ of $\Gal(K/F)$. Algebras in two different classes are not isomorphic as $F$-algebras.
\end{corollary}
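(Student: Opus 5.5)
The statement to prove is Corollary~\ref{cor:Euler}. My plan is to derive it directly from Theorem~\ref{thm:2.3}, specialised to $F_0=F$. Since $\Gal(K/F)=\langle\sigma\rangle$ is cyclic, $\Aut_F(K)=\Gal(K/F)$ is abelian, so $F_0=F$ is an admissible choice in Theorem~\ref{thm:2.3}. The generators of the cyclic group $\Gal(K/F)$ of order $n$ are exactly $\sigma^i$ with $\gcd(i,n)=1$, and there are $\varphi(n)$ of them. For each generator $\sigma_i$ I define the class $\mathcal{C}_i$ to consist of all $F$-algebras $K[t;\sigma_i]/K[t;\sigma_i](t^m-a)$, where $a$ ranges over $K^\times$ in the case $n\nmid m$, and over $K\smallsetminus F$ in the case $n=m$.

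First I check that every algebra in each class is a proper Petit algebra. By Petit's criterion recalled in \S\ref{subsec:nonass}, $\mathbb S_{t^m-a}$ is associative if and only if $n\mid m$ and $a\in F^\times$. In the case $n\nmid m$ this condition fails for every $a$. In the case $n=m$ it fails because $a\notin F$. Hence every member of every $\mathcal{C}_i$ is proper nonassociative. Each class is nonempty: this is immediate for $n\nmid m$, and for $n=m$ we have $K\neq F$ since $n=m>1$.

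Next I show that algebras in different classes are not isomorphic. Take $\sigma_1\neq\sigma_2$ and algebras $A_1\in\mathcal{C}_1$, $A_2\in\mathcal{C}_2$. If $n\nmid m$, Theorem~\ref{thm:2.3}(\ref{thm:main1}) rules out any $F$-algebra isomorphism $A_1\to A_2$. If $n=m$, then $a_2\in K\smallsetminus F$, so Theorem~\ref{thm:2.3}(\ref{thm:main2}) rules out any $F$-isomorphism $A_1\to A_2$. Both conclusions are symmetric in the two algebras, so no isomorphism exists in either direction. The classes $\mathcal{C}_i$ therefore partition the proper Petit algebras of this type into $\varphi(n)$ pieces that are mutually non-isomorphic as $F$-algebras, one for each generator $\sigma_i$.

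I do not expect any real obstacle, since all the substantive content is contained in Theorem~\ref{thm:2.3}. The only points needing care are that $F_0=F$ satisfies the abelian hypothesis, and that in case (ii) the requirement $a\in K\smallsetminus F$ is exactly what makes the algebras proper and allows part (\ref{thm:main2}) to be applied.
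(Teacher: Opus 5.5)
Your proposal is correct and follows essentially the paper's route. The paper cites this corollary from the earlier work as an immediate consequence of Theorem~\ref{thm:2.3}; see the remark right after it that varying $\sigma$ over the generators of $\Gal(K/F)$ partitions the $F_0$-isomorphism classes. Your specialization to $F_0=F$ (valid since $\Aut_F(K)=\Gal(K/F)$ is cyclic, hence abelian), together with the properness check via Petit's associativity criterion and the count of $\varphi(n)$ generators, supplies exactly the details that remark leaves implicit.
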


\subsection{Isomorphisms, equivalences and isometries}\label{sec:isometries}

Define $N_i^\sigma(a)=\sigma^{i-1}(a)\cdots \sigma(a)a$ for all positive integers $i$ and $N_0^\sigma(a)=1$. Note that $\beta \tau(N_i^\tau(\beta))=N_{i+1}^\tau(\beta)$ and thus
\begin{equation}\label{E:Normrelation}
N_{i+j}^\tau(\beta)= N_{i}^\tau(\beta) \cdot \tau^{i}(N_j^\tau(\beta))
\end{equation}
for all $i,j\geq 0$ and automorphisms $\tau$ of $K$.  When $\sigma$ has order $\os$, then  $N_{\os}^\sigma$ is simply the norm  $N_{K/F}$ of the field extension $K/F$.

\begin{definition}\label{Definition:isometryeuivalence}
The two nonassociative rings $\mathbb{S}_f=K[t;\sigma]/K[t;\sigma]f$ and $\mathbb{S}_h=K[t;\sigma]/K[t;\sigma]h$ are called \emph{isometric}, if there exists an isomorphism $$G=G_{\tau,\alpha,k}:\mathbb{S}_f\to \mathbb{S}_h$$
such that $G|_K=\tau\in {\rm Aut}(K)$ and $G(t)=\alpha t^k$ for some integer $k\geq 1$, and some $\alpha\in K^\times$, and \emph{equivalent} if $k=1$.
The isomorphism
$G_{\tau,\alpha,k}$ is  called an \emph{isometry} or a \emph{monomial isomorphism of degree $k$},
     $G_{\tau,\alpha,1} $ is called an \emph{equivalence}
 When $k=1$ we use the notation  $G_{\tau,\alpha}$.
 We call $G_{\tau,\alpha,k}$ an \emph{$F$-isometry} when $G_{\tau,\alpha,k}$ is $F$-linear, respectively  $G_{\tau,\alpha}$ an \emph{$F$-equivalence}, when $G_{\tau,\alpha}$ is $F$-linear  (i.e. $\tau\in \Gal(K/F)$).
\end{definition}

We have
$$G_{\tau,\alpha}(\sum_{i=0}^{\de -1}d_it^i)=\sum_{i=0}^{\de -1}\tau(d_i) N^\sigma_{i}(\alpha) t^i.$$

 In a large number of cases every isomorphism between proper nonassociative rings of the form $\mathbb{S}_{t^\de-a}$
is  monomial, and even stronger, has degree one:

\begin{theorem}\label{T:classifyisomorphism}  \cite[Section 2.1]{BrownPumpluen2018},\cite[Corollary 6.2, Corollary 6.4]{NevinsPumpluen2025submitted}
Assume that one of the following holds:
\begin{itemize}
\item $n\nmid m$,
  \item $n\mid m$, $a\in K\smallsetminus F$ and the technical condition
    \begin{equation}\label{stareq}
        \tag{$\star$} \text{if $s>1$ is the least value for which $sk\geq m$, then $(s+1)k\leq 2 m$},
    \end{equation}
     in \cite{NevinsPumpluen2025submitted} holds for every isomorphism $G$ between two rings $K[t;\sigma]/K[t;\sigma](t^m-a)$ and $K[t;\sigma]/K[t;\sigma](t^m-b)$:
\end{itemize}
Then
$$
K[t;\sigma]/K[t;\sigma](t^m-a)\cong K[t;\sigma]/K[t;\sigma](t^m-b)
$$
as $F$-algebras
 if and only if there exists some $\tau\in \Gal(K/F)$ and some $\alpha\in K^\times$  such that
\begin{equation}\label{equivrelation}
 \tau(a)=N_m^\sigma (\alpha)b.
\end{equation}
The isomorphism is given by $G_{\tau,\alpha}$.
\end{theorem}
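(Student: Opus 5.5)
The plan is to split the proof into two parts. First, show that every $F$-algebra isomorphism $G:\mathbb{S}_{t^m-a}\to\mathbb{S}_{t^m-b}$ is an equivalence $G_{\tau,\alpha}$. Second, determine exactly when a map $G_{\tau,\alpha}$ is multiplicative.

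For the first part, I use that both algebras are proper nonassociative. By Petit's results quoted in Section~\ref{subsec:nonass}, ${\rm Nuc}_l=K$ on both sides, and an isomorphism preserves the left nucleus. Hence $G(K)=K$, and $F$-linearity gives $\tau:=G|_K\in\Gal(K/F)$. Next apply $G$ to $tc=\sigma(c)t$. Since $\Gal(K/F)$ is cyclic, hence abelian, $\tau\sigma=\sigma\tau$, and this gives
\[G(t)\,\tau(c)=\sigma(\tau(c))\,G(t)\qquad\text{for all } c\in K.\]
Write $G(t)=\sum_{i<m} g_it^i$ and compare coefficients using $t^ic=\sigma^i(c)t^i$ for $i<m$. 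This forces $g_i=0$ unless $\sigma^i=\sigma$, i.e.\ unless $i\equiv 1 \pmod n$.

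The remaining task in the first part is to rule out all terms except $i=1$. I would do this by a degree argument on the powers $G(t)^{\circ j}=G(t^j)$ for $j<m$: these must be $K$-linearly independent and together with $K$ generate the whole algebra. If $G(t)$ had a nonzero term of degree $k>1$, then the top-degree terms of the iterated products would overshoot $m$ and wrap around via $t^m\equiv b$. When $n\nmid m$ (the Brown--Pumpl\"un case), the wrapped terms land in degrees incompatible with the eigenspace restriction, which gives a contradiction. When $n\mid m$, I would use condition~\eqref{stareq}: it guarantees that at most one wrap-around occurs in the relevant products, so leading coefficients can be compared to force $k=1$. I expect this step to be the main obstacle, since it requires tracking when monomial products in $\mathbb{S}_{t^m-b}$ cross degree $m$. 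I would first treat the case where $G(t)$ has a single term $g_kt^k$ and then handle the general case by looking at the highest-degree term. The conclusion is $G(t)=\alpha t$, so $G=G_{\tau,\alpha}$.

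For the second part, assume $G=G_{\tau,\alpha}$. In $\mathbb{S}_{t^m-a}$ we have $t^{m-1}\circ t=a$, while in $\mathbb{S}_{t^m-b}$
\[G(t^{m-1})\circ G(t)=N^\sigma_{m-1}(\alpha)t^{m-1}\cdot\alpha t=N^\sigma_{m-1}(\alpha)\sigma^{m-1}(\alpha)\,b=N_m^\sigma(\alpha)\,b.\]
Comparing with $G(a)=\tau(a)$ yields~\eqref{equivrelation}. Conversely, assume~\eqref{equivrelation}. It suffices to check multiplicativity on $ct^i\cdot dt^j$. When $i+j<m$, this follows from~\eqref{E:Normrelation} and $\tau\sigma=\sigma\tau$. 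When $i+j\ge m$, right division gives $t^it^j=\sigma^{i+j-m}(a)t^{i+j-m}$. Then~\eqref{E:Normrelation} with the exponents split as $(i+j-m)+m$, together with $\tau(a)=N_m^\sigma(\alpha)b$, matches both sides. Finally, $G_{\tau,\alpha}$ is bijective because $N_i^\sigma(\alpha)\neq 0$ for all $i$.
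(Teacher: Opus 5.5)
The paper does not prove this statement; it quotes it from Brown--Pumpl\"un and Nevins--Pumpl\"un. Your second half is correct: $G_{\tau,\alpha}$ is an isomorphism exactly when $\tau(a)=N^\sigma_m(\alpha)b$, via $t^{m-1}\circ t$ and $N^\sigma_{i+j}(\alpha)=N^\sigma_{i+j-m}(\alpha)\,\sigma^{i+j-m}(N^\sigma_m(\alpha))$. Your reduction to $G(t)=X=\sum_{i\equiv 1\,(n)}g_it^i$ is also correct. The substance of the theorem is the remaining claim $X=g_1t$, and there your proposal is only a plan.

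For $n\nmid m$ the plan works once made precise. Let $k>1$ be the top degree of $X$ and $s$ the least integer with $sk\ge m$. Then $G(t^j)=X^j$ in $K[t;\sigma]$ for $j<s$. In $G(t^s)=X^{s-1}\circ X$, the product of the two top terms is the only wrapped term of degree $sk-m$. Its coefficient $N^\sigma_s(g_k)\,\sigma^{sk-m}(b)\neq 0$ sits in degree $\equiv s-m\not\equiv s \pmod n$, which contradicts $G(t^s)\in\bigoplus_{i\equiv s\,(n)}Kt^i$.

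The genuine gap is the case $n\mid m$. There $k\equiv 1$ and $m\equiv 0 \pmod n$, so wrapped terms fall into the same eigenspace as unwrapped ones, and the eigenspace restriction gives nothing. Your sketch (``at most one wrap-around, so leading coefficients can be compared'') does not say which two expressions are compared or why they must differ. It also never uses $a\notin F$, and that hypothesis cannot be avoided. Take $F=\mathbb{Q}(\omega)$ with $\omega$ a primitive cube root of unity, $K=F(\sqrt[3]{2})$, $m=6$ and $a=-\tfrac13\in F$. Then $t\mapsto t+(2+\sqrt[3]{2}+2\sqrt[3]{4})\,t^4$ extends to a non-monomial $F$-automorphism of the associative algebra $\mathbb{S}_{t^6-a}$, even though $(\star)$ holds for $k=4$.

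The missing idea is to exploit nonassociativity:
\begin{itemize}
\item Since $t^{s-1},t,t$ associate in $\mathbb{S}_{t^m-a}$, the associator $(X^{s-1}\circ X)\circ X-X^{s-1}\circ(X\circ X)$ must vanish in $\mathbb{S}_{t^m-b}$.
\item Write $X^s=q(t^m-b)+r$ with $\deg r<m$. Then this associator equals $-\bigl(q(t^m-b)X\bigr)\,{\rm mod}_r\,(t^m-b)$.
\item Because $n\mid m$ and $X$ is supported in degrees $\equiv 1\pmod n$, one has $(t^m-b)X=X(t^m-b)+(\sigma(b)-b)X$. So the associator is $-\bigl(q(\sigma(b)-b)X\bigr)\,{\rm mod}_r\,(t^m-b)$.
\item Since $b\notin F$ (as $\mathbb{S}_{t^m-b}$ is not associative), the polynomial $q(\sigma(b)-b)X$ is nonzero. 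By $(\star)$ it has degree $(s+1)k-m\le m$, and it has zero constant term. Hence it does not lie in $K[t;\sigma](t^m-b)$, so the associator is nonzero.
\end{itemize}
This contradiction is what forces $k=1$, and it is exactly the step your proposal leaves unproved.
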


 Thus there may be some non-monomial isomorphisms between proper nonassociative Petit rings which can only occur when their degree does not satisfy \eqref{stareq},  $n\mid m$ and $a$ or $b$ lie outside of $F$.

The equivalences between $K[t;\sigma]/K[t;\sigma](t^m-a)$ and $K[t;\sigma]/K[t;\sigma](t^m-b)$ play an important role when classifying the general Petit rings $\mathbb{S}_f$ over $F$, too.

\begin{corollary}  \label{cor:equivpolycyclic1}
Let $f(t)=t^m-\sum_{i=0}^{m-1}a_it^i, h(t)=t^m-\sum_{i=0}^{m-1}b_it^i\in K[t;\sigma]$.
If
$G_{\tau,\alpha}:K[t;\sigma]/K[t;\sigma]f\rightarrow K[t;\sigma]/K[t;\sigma]h$ is an isomorphism, then for $\beta_i=\sigma^i(\alpha)$,
  $$G_{\tau,\beta_i}:K[t;\sigma]/K[t;
  \sigma](t^{m-i}-a_i)\to K[t;\sigma]/K[t;\sigma](t^{m-i} -b_i)$$
is an isomorphism for all $i \in\{ 0, \ldots, m-1\}$ such that  $(a_i,b_i)\not=(0,0)$.
\end{corollary}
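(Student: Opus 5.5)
We need to prove Corollary~\ref{cor:equivpolycyclic1}: if $G_{\tau,\alpha}:\mathbb{S}_f\to\mathbb{S}_h$ is an isomorphism, then for each $i$ with $(a_i,b_i)\neq(0,0)$ the map $G_{\tau,\sigma^i(\alpha)}$ is an isomorphism $\mathbb{S}_{t^{m-i}-a_i}\to\mathbb{S}_{t^{m-i}-b_i}$.

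The plan is to extract, from the single isomorphism $G_{\tau,\alpha}$, one scalar identity for each coefficient, and then match each identity with the criterion for the smaller algebras. First I would show that $G=G_{\tau,\alpha}$ being multiplicative forces \[G(t^{m-1})\circ G(t)=G(t^{m-1}\circ t).\] In $\mathbb{S}_f$ we have $t^{m-1}\circ t=t^m \bmod_r f=\sum_i a_it^i$. Hence the right-hand side equals \[\sum_i \tau(a_i)N_i^\sigma(\alpha)t^i.\] On the left, \[G(t^{m-1})=N_{m-1}^\sigma(\alpha)t^{m-1},\qquad G(t)=\alpha t,\] so the product is $N_{m-1}^\sigma(\alpha)\sigma^{m-1}(\alpha)t^m=N_m^\sigma(\alpha)t^m$. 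Computed in $\mathbb{S}_h$, this becomes $N_m^\sigma(\alpha)\sum_i b_it^i$. Comparing coefficients gives \[\tau(a_i)N_i^\sigma(\alpha)=N_m^\sigma(\alpha)\,b_i \quad\text{for all } i.\]

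Next, by \eqref{E:Normrelation}, \[N_m^\sigma(\alpha)=N_i^\sigma(\alpha)\,\sigma^i\bigl(N_{m-i}^\sigma(\alpha)\bigr).\] Since $\sigma$ commutes with itself, \[\sigma^i\bigl(N_{m-i}^\sigma(\alpha)\bigr)=N_{m-i}^\sigma\bigl(\sigma^i(\alpha)\bigr)=N_{m-i}^\sigma(\beta_i).\] Cancelling the unit $N_i^\sigma(\alpha)$ therefore yields \[\tau(a_i)=N_{m-i}^\sigma(\beta_i)\,b_i.\]

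Finally, I would argue that this identity is exactly what makes $G_{\tau,\beta_i}$ an isomorphism $\mathbb{S}_{t^{m-i}-a_i}\to\mathbb{S}_{t^{m-i}-b_i}$. This is the "if" direction of the criterion \eqref{equivrelation}, applied with $m-i$ in place of $m$, and it is a direct verification that holds without hypothesis ($\star$).

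I would carry out that verification directly. $G_{\tau,\beta_i}$ is bijective and additive. For multiplicativity on monomials $ct^j$, $dt^k$ with $j,k<m-i$:
\begin{itemize}
\item If $j+k<m-i$, no reduction occurs and multiplicativity reduces to the norm relation \eqref{E:Normrelation} together with the fact that $\tau$ commutes with $\sigma$.
\item If $j+k\ge m-i$, both sides reduce once by $t^{m-i}=a_i$, respectively $t^{m-i}=b_i$. The factor produced is precisely $\tau(a_i)$ versus $N_{m-i}^\sigma(\beta_i)b_i$, again using \eqref{E:Normrelation} to split $N^\sigma_{j+k}$.
\end{itemize}
The hypothesis $(a_i,b_i)\neq(0,0)$ ensures that the statement concerns the genuine pair of algebras and that both coefficients are nonzero: the identity $\tau(a_i)=N_{m-i}^\sigma(\beta_i)b_i$ shows that one vanishes iff the other does. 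Degenerate cases such as $m-i=1$ are either trivial or interpreted the same way.

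The main obstacle is bookkeeping: getting the index shift right, namely that the residual norm is $\sigma^i(N_{m-i}^\sigma(\alpha))$ rather than $N_{m-i}^\sigma(\alpha)$. That shift is exactly why $\beta_i=\sigma^i(\alpha)$ appears.
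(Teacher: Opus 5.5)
Your proof is correct and takes essentially the paper's route: both reduce to the identity $\tau(a_i)=N^\sigma_{m-i}(\sigma^i(\alpha))\,b_i$ and then read it as the equivalence criterion \eqref{equivrelation} for the algebras $\mathbb{S}_{t^{m-i}-a_i}$ and $\mathbb{S}_{t^{m-i}-b_i}$. The only difference is that the paper quotes the known coefficient-wise characterization of when $G_{\tau,\alpha}$ is an isomorphism, whereas you derive the needed direction from the single product $t^{m-1}\circ t$ and check the binomial criterion by hand; in that check, the relation $\tau\sigma=\sigma\tau$ you use follows from $G(t\circ c)=G(t)\circ G(c)$ even if $\tau$ is only assumed to lie in $\mathrm{Aut}(K)$.
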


\begin{proof}
The algebras  $\mathbb{S}_f$ and $\mathbb{S}_h$ are equivalent   via $G_{\tau,\alpha}$, if and only if there exists $\alpha \in K^{\times}$ and $\tau\in {\rm Aut}(K)$, such that
$$\tau(a_i) = N_{m-i}^\sigma ( \sigma^i(\alpha) ) b_i$$
for all $i \in\{ 0, \ldots, m-1\}$. Put $\beta_i=\sigma^i(\alpha)$,
 then
$$\tau(a_i) = N_{m-i}^\sigma ( \beta_i ) b_i$$
which implies that the map 
$G_{\tau,\beta_i}: \mathbb{S}_{t^{m-i}-a_i}\to \mathbb{S}_{t^{m-i}-b_i}$ is an isomorphism for all $i \in\{ 0, \ldots, m-1\}$ such that  $(a_i,b_i)\not=(0,0)$.
\end{proof}

\subsection{Counting $F$-equivalence classes using $G_{\tau,\alpha}$ when $n \mid m$}

We first fix a choice of cyclic field extension $K/F$ of degree $n$ (Lemma \ref{Lem:differentextensions}).

 We want to classify the proper nonassociative rings
  $\mathbb{S}_{t^m-a}$
 up to $F$-equivalence and call the resulting classes \emph{$F$-equivalence classes}.
We do not assume that $t^m-a\in K[t;\sigma]$ is irreducible.

\begin{theorem}\label{T:parametrizingnm}
    Suppose $K$ is a cyclic Galois extension of degree $n$.  If $m=ns$, with $s\in \mathbb{N}$, then the  $F$-equivalence classes  of Petit algebras of the form $\Kone{a}$, for some $a\in K^\times$,  are parametrized by the orbits under $\Gal(K/F)$ of the quotient group $K^\times/N_{K/F}(K^\times)^s$.
\end{theorem}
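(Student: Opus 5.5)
The plan is to reduce $F$-equivalence to the criterion \eqref{equivrelation} and then read that criterion as an orbit relation. By definition, $\Kone{a}$ and $\Kone{b}$ are $F$-equivalent exactly when some $G_{\tau,\alpha}$ with $\tau\in\Gal(K/F)$, $\alpha\in K^\times$ is an isomorphism between them. The computation in the proof of Corollary~\ref{cor:equivpolycyclic1}, applied to $f=t^m-a$, $h=t^m-b$ (only the constant coefficient is nonzero), shows that this happens if and only if
\[
\tau(a)=N_m^\sigma(\alpha)\,b .
\]
We do not need the technical condition \eqref{stareq} here, because we classify only up to equivalences and not up to arbitrary isomorphisms. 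Concretely, one checks that $G_{\tau,\alpha}$ is multiplicative on $K$ and on $t$, and that it sends $t^m-a$ into $K[t;\sigma](t^m-b)$ exactly under the displayed relation.

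Next we simplify $N_m^\sigma(\alpha)$ using $m=ns$. Iterating \eqref{E:Normrelation} gives
\[
N_{ns}^\sigma(\alpha)=\prod_{j=0}^{s-1}\sigma^{nj}\bigl(N_n^\sigma(\alpha)\bigr).
\]
Since $\sigma^n=\id$ and $N_n^\sigma=N_{K/F}$, this equals $N_{K/F}(\alpha)^s$. So the condition becomes $\tau(a)\in N_{K/F}(\alpha)^s b$. As $\alpha$ ranges over $K^\times$, the elements $N_{K/F}(\alpha)^s$ range over the subgroup $H:=N_{K/F}(K^\times)^s$ of $K^\times$. Hence $\Kone{a}$ and $\Kone{b}$ are $F$-equivalent if and only if $\tau(a)H=bH$ for some $\tau\in\Gal(K/F)$.

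It remains to show that this is the orbit relation of a group action on $K^\times/H$. We check that $\Gal(K/F)$ acts on $K^\times/H$ by $\tau\cdot(aH)=\tau(a)H$. This is well defined because $H\subseteq F^\times$, so $\tau$ fixes $H$ pointwise and $\tau(H)=H$; it is clearly an action by group automorphisms. The relation above says precisely that $bH$ lies in the $\Gal(K/F)$-orbit of $aH$. Therefore the map sending $\Kone{a}$ to the orbit of $aH$ is well defined on $F$-equivalence classes and injective, and it is surjective since every coset has a representative $a\in K^\times$. This gives the claimed parametrization.

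I expect the main obstacle to be the first step, namely justifying cleanly that $G_{\tau,\alpha}$ is an isomorphism exactly when $\tau(a)=N_m^\sigma(\alpha)b$. I would cite Theorem~\ref{T:classifyisomorphism}, noting that its proof of the equivalence criterion does not use \eqref{stareq}; that condition is only needed to rule out non-monomial isomorphisms. I would also remark that the statement covers all $a\in K^\times$, including $a\in F$, and the criterion holds uniformly there too. The remaining steps are routine.
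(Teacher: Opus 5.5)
Your proposal is correct and follows the same route as the paper: reduce $F$-equivalence via $G_{\tau,\alpha}$ to $\tau(a)=N_m^\sigma(\alpha)b$, rewrite $N_{ns}^\sigma(\alpha)=N_{K/F}(\alpha)^s\in F^\times$, and read the result as the $\Gal(K/F)$-orbit relation on $K^\times/N_{K/F}(K^\times)^s$. You also note that the criterion for $G_{\tau,\alpha}$ to be an isomorphism does not need \eqref{stareq}, so it covers $a\in F$ as well; this slightly tightens the paper's appeal to Theorem~\ref{T:classifyisomorphism}.
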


 By Theorem~\ref{T:classifyisomorphism},  if we additionally have  $a\in K\smallsetminus F$ and the technical condition (*) in \cite{NevinsPumpluen2025submitted} holds  for every isomorphism $G$ between two rings $K[t;\sigma]/K[t;\sigma](t^m-a)$ and $K[t;\sigma]/K[t;\sigma](t^m-b)$, too, then these are indeed all the possible $F$-isomorphisms between the two algebras.

\begin{proof}
By Theorem~\ref{T:classifyisomorphism}, we have that
\begin{equation}\label{E:iso}
K[t;\sigma]/K[t;\sigma](t^m-a)\cong K[t;\sigma]/K[t;\sigma](t^m-b)
\end{equation}
via an $F$-equivalence $G_{\tau,\alpha}$ if and only if there exists some $\tau\in \Gal(K/F)$ and some  $\alpha\in K^\times$  such that
$$ a=\tau(b) N_m^\sigma(\alpha).$$
Since $m=ns$,  this condition simplifies to
$$ a = \tau(b)N_{K/F}(\alpha)^s.$$
Since $N_{K/F}(\alpha)^s\in F^\times$, it is fixed under $\tau$.  Letting now $\tau$ vary over $\Gal(K/F)=\langle \sigma \rangle$ yields that for fixed $b\in K$, \eqref{E:iso} holds for every
$$
a \in \bigcup_{0\leq k \leq n-1}\sigma^k(b)N_{K/F}(K^\times)^s.
$$
That is, $a$ must lie in one of these cosets of the subgroup $N_{K/F}(K^\times)^s$ of $K^\times$, and these cosets form a single $\Gal(K/F)$-orbit in $K^\times/N_{K/F}(K^\times)^s$.
\end{proof}

To simplify notation, when $m=ns$ we write $a\simeq b$ if and only if there exist
$\tau\in\mathrm{Gal}(K/F)$ and $\alpha\in K^\times$ such that
$G_{\tau,\alpha}\colon K[t;\sigma]/K[t;\sigma](t^m-a)\to K[t;\sigma]/K[t;\sigma](t^m-b)$
is an isomorphism. Equivalently, $a\simeq b$ if and only if
$a\in\sigma^i(b)N_{K/F}(K^\times)^s$ for some $i\in\mathbb{Z}$.

\section{An explicit parametrization  when $n\mid m$}\label{sec:main}

In this section we give a parametrization of  the Petit rings $K[t;\sigma]/K[t;\sigma](t^m-a)$ as $F$-algebras up to $F$-equivalence, that is up to $F$-isomorphisms $G_{\tau,\alpha}$, assuming that $n\mid m$. These  techniques work for arbitrary fields $F$  that contain a primitive $n$th root of unity, where the extension $K/F$ can be described by Kummer theory.

We begin in Section~\ref{SS:Kummer} with a brief introduction to Kummer theory and a first partition of $K$ into Galois-stable subsets.  We then assemble the ingredients of the parametrization in Section~\ref{SS:parametrization}, and state the main theorem, Theorem~\ref{Main:general}.  We prove it in Section~\ref{SS:proof} and derive several consequences.

\subsection{Kummer extensions}\label{SS:Kummer}
 Let $F$ be a field and $n>1$ be an integer.  Assume that $F$ contains a primitive $n$th root of unity $\zeta$.  If $F$ has prime characteristic $p$, then since the $p$-power map is an automorphism we infer that $p \nmid n$.

By Kummer theory, the  Galois extensions of $F$ of degree $n$ are in bijection with the subgroups of  $F^\times/(F^\times)^n$ of order $n$ \cite[Ch. VI, Theorem 6.2, 8.2]{LangAlgebra}.  Furthermore, each  cyclic Galois field extension of degree $n$ is obtained as $K=F[x]/(x^n-b)$ for some $b\in F^\times$ such that $b$ is not an $r$th power, for any divisor $r>1$ of $n$.

Let $K$ be such a cyclic extension and let $\Gal(K/F)=\langle \sigma \rangle$. We may choose $\beta\in K^\times$ to be a root of $x^n-b$ for which $\sigma(\beta) = \zeta \beta$.
Then
\begin{equation}\label{E:Kummerbasis}
B=\{1, \beta, \beta^2, \ldots,\beta^{n-1}\}
\end{equation}
is an $F$-basis for $K$ with the property that $\sigma(\beta^k)=\zeta^k\beta^k$ for all $0\leq k <n$ \cite[Ch. VI, Theorem 6.2]{LangAlgebra}.

We now partition $K$ into subsets that are invariant both under $\Gal(K/F)$ and left multiplication by elements of $F^\times$.  Due to the particularly elegant nature of Kummer extensions, this can be done by specifying the nonzero coefficients of an element $a\in K$ with respect to the basis $B$ of \eqref{E:Kummerbasis}.

\begin{defn}\label{def:K(I)}
For every subset $I \subset \{0, 1, \ldots, n-1\}$, define
$$
K(I) = \{ \sum_{i=0}^{n-1} a_i \beta^i \mid \forall i\in I, a_i\in F^\times; \forall i\notin I, a_i=0\} \subset K.
$$
\end{defn}

Let $\mathcal{P}$ denote the set of all subsets of $\{0,1,\ldots, n-1\}$ and let $\mathcal{P}_0=\{I \in \mathcal{P} \mid I\neq \emptyset, I\neq \{0\}\}$.

\begin{lemma}\label{lemma:KminusFisP0}
    For every $I\in \mathcal{P}$, the set $K(I)$ is invariant under both $\Gal(K/F)$ and multiplication by elements of $F^\times$.  Moreover,  $K$ is the disjoint union over all $I\in \mathcal{P}$ of the sets $K(I)$, and
    $$
K \smallsetminus F = \bigcup_{I\in \mathcal{P}_0} K(I).
$$
\end{lemma}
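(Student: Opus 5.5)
The plan is to work directly with coordinates in the Kummer basis $B$ of \eqref{E:Kummerbasis}. Every $a\in K$ has a unique expansion $a=\sum_{i=0}^{n-1}a_i\beta^i$ with $a_i\in F$. Define its support $I(a)=\{i \mid a_i\neq 0\}\in\mathcal{P}$. By Definition~\ref{def:K(I)}, $a\in K(I)$ exactly when $I(a)=I$. Since the support of each $a$ is uniquely determined, the sets $K(I)$ are pairwise disjoint and their union is all of $K$. Note that $K(\emptyset)=\{0\}$ and $K(\{0\})=F^\times$.

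For the invariance claims, I will compute the action on coordinates. Take $c\in F^\times$. Then $ca=\sum (ca_i)\beta^i$, and $ca_i\neq 0$ if and only if $a_i\neq 0$, so $I(ca)=I(a)$. For the Galois action, it suffices to check the generator $\sigma$, since $K(I)$ is then stable under every power $\sigma^k$. Because $\sigma$ is $F$-linear and $\sigma(\beta^i)=\zeta^i\beta^i$, we get
$$\sigma(a)=\sum_{i=0}^{n-1}\zeta^i a_i\beta^i.$$
As $\zeta^i\in F^\times$, the coefficient $\zeta^i a_i$ is nonzero exactly when $a_i$ is, so $I(\sigma(a))=I(a)$.

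The last claim follows from $F=K(\emptyset)\sqcup K(\{0\})$. An element lies in $F$ precisely when all its coefficients $a_i$ with $i\geq 1$ vanish, that is, when its support is contained in $\{0\}$. Taking complements inside the disjoint union $K=\bigsqcup_{I\in\mathcal{P}}K(I)$ then gives $K\smallsetminus F=\bigcup_{I\in\mathcal{P}_0}K(I)$.

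I do not expect a real obstacle here. The only point that needs care is the identification of $F$ inside $K$ with $\operatorname{span}_F\{1\}$, and this is immediate because $B$ is an $F$-basis with first element $1$.
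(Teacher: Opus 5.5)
Your proposal is correct and follows the paper's proof essentially step for step. Like the paper, you use the support of an element in the Kummer basis $B$ for the disjoint-union claim. You check invariance via $\sigma(\beta^i)=\zeta^i\beta^i$ with $\zeta^i\in F^\times$, and you identify $F$ with the elements supported in $\{0\}$, i.e.\ $K(\emptyset)\sqcup K(\{0\})$.
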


\begin{proof}
Let $I\in \mathcal{P}$ and suppose $a=\sum_{i\in I}a_i\beta^i\in K(I)$. Then
$$
\sigma(a)=\sum_{i\in I}\sigma(a_i\beta^i) = \sum_{i\in I} a_i\zeta^i\beta^i
$$
since $\sigma(a_i)=a_i$ for all $i$ and $\sigma(\beta^i)=\zeta^i\beta^i$.  As $a_i\zeta^i\in F^\times$ for all $i\in I$, we conclude that $\sigma(a)\in K(I)$, and thus $K(I)$ is invariant under $\Gal(K/F)=\langle \sigma \rangle$.  Similarly, left multiplication by elements of $F^\times$ does not change the positions of the nonzero coefficients of $a$ relative to $B$, and so preserves the set $K(I)$.

    Every $a\in K$ can be written in the form $\sum_ia_i\beta^i$; choosing $I=\{i\mid a_i\neq 0\}$ yields $a\in K(I)$.  By construction, these sets are disjoint, so the first statement holds.  The element $a$ lies in $F$ if and only if $a_i=0$ for all $i>0$, that is, if and only if either $a\in K(\{0\})$ (so that $a\in F^\times$) or $a\in K(\emptyset)$ (so that $a=0$).
\end{proof}

\subsection{A parametrization of the $F$-equivalence  classes  when $n\mid m$} \label{SS:parametrization}

From now on,  assume that $K/F$ is a Kummer extension as in Section~\ref{SS:Kummer} and that $n\mid m$.
Our goal in this section is to give a set of representatives $a$ for the distinct
degree one $F$-isometry classes of algebras $\Kone{a}$, generalizing ideas presented in \cite{NevinsPumpluen2025}.

We begin by applying Theorem~\ref{T:parametrizingnm} to  the partition of $K =\bigcup_{I\in \mathcal{P}}K(I)$.

We first by note that the distinct subsets $I\in \mathcal{P}$ partition the equivalence classes.

\begin{lemma}\label{IJdistinct}
Suppose $a\in K(I)$ and $b\in K(J)$, for some $I, J\in \mathcal{P}$. If $a\simeq b$ then we must have $I=J$.
\end{lemma}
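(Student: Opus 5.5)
The plan is to unwind the definition of $\simeq$ and then apply the two invariance properties of Lemma~\ref{lemma:KminusFisP0}. By the definition given after Theorem~\ref{T:parametrizingnm}, the hypothesis $a\simeq b$ means that $a\in\sigma^i(b)N_{K/F}(K^\times)^s$ for some integer $i$. So we may write
$$a = N_{K/F}(\alpha)^s\,\sigma^i(b)$$
for some $\alpha\in K^\times$, where $c:=N_{K/F}(\alpha)^s\in F^\times$.

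First, since $b\in K(J)$ and $K(J)$ is invariant under $\Gal(K/F)$ by Lemma~\ref{lemma:KminusFisP0}, we have $\sigma^i(b)\in K(J)$. Second, $K(J)$ is also invariant under multiplication by $F^\times$, so $a=c\,\sigma^i(b)\in K(J)$. Concretely, if $b=\sum_{j\in J}b_j\beta^j$, then
$$a=\sum_{j\in J}c\,\zeta^{ij}b_j\beta^j,$$
and every coefficient $c\,\zeta^{ij}b_j$ with $j\in J$ is a nonzero element of $F$.

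Now $a$ lies in both $K(I)$ and $K(J)$. Lemma~\ref{lemma:KminusFisP0} says the sets $K(I)$, $I\in\mathcal{P}$, are pairwise disjoint, which forces $I=J$. Equivalently, $I$ is exactly the set of indices of the nonzero coefficients of $a$ in the basis $B$, and that set is uniquely determined because $B$ is a basis.

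There is no real obstacle in this argument. The only point to check is that the factor $N_{K/F}(\alpha)^s$ really lies in $F^\times$: it is a norm, so it lies in $F$, and it is nonzero since $\alpha\neq 0$. This is what allows us to use the scaling-invariance of $K(J)$ rather than needing invariance under multiplication by arbitrary elements of $K^\times$.
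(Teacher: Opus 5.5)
Your proposal is correct and follows the same route as the paper: both use the invariance of $K(J)$ under $\Gal(K/F)$ and under multiplication by $N_{K/F}(K^\times)^s\subset F^\times$ (Lemma~\ref{lemma:KminusFisP0}) to conclude that $a\in K(J)$. You also state explicitly that the sets $K(I)$ are pairwise disjoint, which the paper uses implicitly when it concludes $I=J$.
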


\begin{proof}[Proof of Lemma~\ref{IJdistinct}]
    Since $K(J)$ is invariant under $\sigma$ and under multiplication by $F^\times$, it is in particular invariant under $\sigma^i$ for each $i$ and also under multiplication by $N_{K/F}(K^\times)^s\subset F^\times$.  Thus $a\simeq b$ implies $I=J$.
\end{proof}

It thus suffices to decompose each $K(I)$ into $\Gal(K/F)$-orbits of cosets of $\normKFim^s$.

Let us set our notation.  We have $s=m/n$.
For each nonempty $I\in \mathcal{P}$, we index the elements of $I$ as
$$
i_0<i_1<\cdots<i_k.
$$
If $k\geq 1$ then write $F^{[\times k]}$ for the $k$-fold direct product $F^\times\times F^\times \times \cdots \times F^\times$, not to be confused with the set of powers $(F^\times)^s = \{x^s \mid x\in F^\times\}$.

In the following paragraphs, we will define the following elements:
\begin{enumerate}[\textbf{P}1.] \setlength{\itemindent}{10pt}
    \item \label{List:1} A primitive $n$th root of unity $\zeta$, and the size $n/t_s$ of the intersection $\mu_n\cap \normKFim^s$;
    \item \label{List:2} A set of representatives $\mathcal{D}_s$ of the quotient $F^\times/\mu_n\normKFim^s$ (Lemma~\ref{L:FmunNormsrepresentatives});

    \item \label{List:3} For each $I\in \mathcal{P}$, with $|I|=k+1\geq 2$, a cyclic group $\Delta_{t_s,I}$ (Definition~\ref{D:DeltatsI}) of $F^{[\times k]}$;
    \item \label{List:4} A set of representatives $\mathcal{E}_{s,I}$ for the quotient $F^{[\times k]}/\Delta_{t_s,I}$;
    \item \label{List:5} Subsets $K(t_s,I;a_{i_0})$ of $K(I)$ (Definition~\ref{D:KtIa}); and, finally
    \item \label{List:6} The set $\mathcal{S}_s(K)$ (Definition~\ref{D:SsK}) that we prove, in  Theorem~\ref{Main:general}, parametrizes these algebras.
\end{enumerate}

We begin with \itemref{1}. Write
$$\mu_n = \langle \zeta \rangle \subset F^\times$$ for the group of $n$th roots of unity.  Since $\mu_n \cap N_{K/F}(K^\times)^s$ will be a subgroup of $\mu_n$, its order is $n/t_s$  for some $t_s$ dividing $n$.  Then in particular we have
\begin{equation}\label{E:t_0}
\mu_n\cap N_{K/F}(K^\times)^s = \langle \zeta^{t_s} \rangle,
\end{equation}
and can identify this with the group $\mu_{n/t_s}$ of $n/t_s$th roots of unity.

Now as in \itemref{2}, let $\mathcal{D}_s \subset F^\times$ denote a set of coset representatives for the quotient
    $$
    F^\times / \mu_n\normKFim^s.
    $$

\begin{lemma} \label{L:FmunNormsrepresentatives}
With the notation of \itemref{1} and \itemref{2},  a set of coset representatives for
    $
    F^\times/\normKFim^s$
    is $\{\zeta^{i}d \mid 0\leq i < t_s, d\in \mathcal{D}_s\}$.
\end{lemma}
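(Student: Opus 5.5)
We treat this as a statement about a tower of subgroups of the abelian group $F^\times$:
$$
N:=\normKFim^s \;\subseteq\; \mu_n N \;\subseteq\; F^\times .
$$
The standard fact is that if $H\subseteq M\subseteq A$ are groups with $A$ abelian, then a set of representatives of $A/H$ is obtained from representatives $\{d\}$ of $A/M$ and representatives $\{c\}$ of $M/H$ by taking all products $cd$. So the plan is to identify a set of representatives of $\mu_n N/N$ and then combine it with $\mathcal{D}_s$.

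First we compute $\mu_n N/N$. By the second isomorphism theorem,
$$
\mu_n N/N\cong \mu_n/(\mu_n\cap N).
$$
By \eqref{E:t_0}, $\mu_n\cap N=\langle\zeta^{t_s}\rangle$, which has order $n/t_s$. The quotient $\mu_n/\langle\zeta^{t_s}\rangle$ is therefore cyclic of order $t_s$, generated by the image of $\zeta$. Hence $\zeta^0,\dots,\zeta^{t_s-1}$ are pairwise distinct modulo $N$ and give representatives of $\mu_n N/N$. Concretely, $\zeta^i\zeta^{-j}\in N$ forces $t_s\mid i-j$, and this holds because $t_s$ is the least positive exponent with $\zeta^{t_s}\in N$, which is exactly \textbf{P}1.

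Next we check that the products $\zeta^i d$, with $0\le i<t_s$ and $d\in\mathcal{D}_s$, cover $F^\times/N$.

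\emph{Covering.} Take $x\in F^\times$. By the choice of $\mathcal{D}_s$ there is $d\in\mathcal{D}_s$ with $x\in d\,\mu_n N$, so $x=d\zeta^j y$ for some $j$ and some $y\in N$. Reduce $j$ modulo $t_s$ to get $i$ with $0\le i<t_s$. Since $\zeta^{j-i}\in\langle\zeta^{t_s}\rangle\subseteq N$, we get $x\in \zeta^i d N$.

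\emph{Distinctness.} Suppose $\zeta^i dN=\zeta^{i'}d'N$. Projecting to $F^\times/\mu_n N$ gives $d\,\mu_nN=d'\,\mu_nN$, so $d=d'$. Then $\zeta^{i-i'}\in N\cap\mu_n=\langle\zeta^{t_s}\rangle$, so $t_s\mid i-i'$, and hence $i=i'$.

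There is no real obstacle in this argument. The only point needing care is the identification $\mu_n\cap N=\langle \zeta^{t_s}\rangle$, with $t_s$ the minimal exponent. This holds because every subgroup of the cyclic group $\mu_n$ is generated by $\zeta^{t}$, where $t$ is the least positive exponent lying in that subgroup, and $t$ divides $n$.
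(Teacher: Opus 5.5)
Your proposal is correct and follows essentially the same route as the paper: you use the second isomorphism theorem to identify $\mu_n N_{K/F}(K^\times)^s/N_{K/F}(K^\times)^s$ with $\mu_n/\langle\zeta^{t_s}\rangle$, represented by $\zeta^0,\dots,\zeta^{t_s-1}$, and then combine these with $\mathcal{D}_s$. The only difference is that you write out the covering and distinctness checks explicitly, which the paper condenses into the decomposition $d\,\mu_n N_{K/F}(K^\times)^s=\bigsqcup_{0\le i<t_s} d\zeta^i N_{K/F}(K^\times)^s$.
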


\begin{proof}
    The group generated by $\mu_n$ and $\normKFim^s$ is simply their product.  We note that
    $$
    \mu_n\normKFim^s/\normKFim^s \cong \mu_n/(\mu_n\cap \normKFim^s),
    $$
    which is finite.  Since $\mu_n\cap \normKFim^s=\langle \zeta^{t_s}\rangle$, it follows that this quotient is represented by the set $S=\{1,\zeta, \cdots, \zeta^{t_s-1}\}$.  Thus for each $d\in \mathcal{D}_s$ we have
    $$
    d\mu_n\normKFim^s = \bigsqcup_{0\leq i<t_s}d \zeta^i \normKFim^s,
    $$
    as required.
\end{proof}

The subgroup of \itemref{3} is more complex to define.

\begin{definition}\label{D:DeltatsI}
    Let $I = \{i_0<i_1<\cdots<i_k\}\in \mathcal{P}$ be nonempty and let  $t_s$ be as in \itemref{1}. If $k\geq 1$  then define the subgroup
$$
\Delta_{t_s,I} = \left\{ (\zeta^{\ell (i_1-i_0)}, \zeta^{\ell(i_2-i_0)}, \ldots, \zeta^{\ell (i_k-i_0)}) \in F^{[\times k]}  \mid \ell \in \frac{t_s}{\gcd(i_0,t_s)}\mathbb{Z}\right\}
$$
of $F^{[\times k]}$.
\end{definition}

This is a normal cyclic subgroup of $F^{[\times k]}$.

\begin{example}\label{Eg:munisnormtothes}
Suppose $\mu_n\cap N_{K/F}(K^\times)^s=\{1\}$
    and $\gcd(i_0,n)=1$ then $t_s=t_s/\gcd(i_0,t_s)=n$.  In this case $$\Delta_{t_s,I}=\{(1,1,\cdots,1)\in F^{[\times k]}\}.$$
    \end{example}
\begin{example}\label{Eg:otherextreme}
At the other extreme, if  $t_s\mid  i_0$,
then $\frac{t_s}{\gcd(i_0,t_s)}=1$, and  $\Delta_{t_s,I}$ is the cyclic group generated by $(\zeta^{i_1-i_0}, \cdots, \zeta^{i_k-i_0})$, which has order $$|\Delta_{t_s,I}|=n/\gcd(n,i_1-i_0, i_2-i_0, \cdots, i_{k}-i_0).$$  In this case, $\Delta_{t_s,I}$ is equal to the set denoted $\Delta_I$ in \cite[Definition 5.4]{NevinsPumpluen2025}.
\end{example}

For \itemref{4}, choose a set of coset representatives $\mathcal{E}_{s,I}$ for the quotient $F^{[\times k]}/\Delta_{t_s,I}$.  This allows us to define \itemref{5} as follows.

\begin{definition}\label{D:KtIa}
For each fixed choice of $a_{i_0}\in F^\times$, define the subset
$$
K(t_s,I;a_{i_0})=\left\{ a_{i_0}\beta^{i_0} + \sum_{j=1}^{k}a_{i_j}\beta^{i_j}  \middle| (a_{i_1},a_{i_2}, \ldots, a_{i_k})\in \mathcal{E}_{s,I} \right\} \subset K(I).
$$
\end{definition}

\begin{example}
In the setting of Example~\ref{Eg:munisnormtothes},
$K(t_s,I;a_{i_0})$ is simply the set of all elements of $K(I)$ whose $i_0$th coefficient is $a_{i_0}$.
\end{example}

Finally, we define \itemref{6}, which will be proven to parametrize the $F$-equivalence classes of algebras of the form $\Kone{a}$.

\begin{definition}\label{D:SsK}
Suppose $m=ns$ for some $s\in \ZZ$ and that $\zeta$ is a primitive $n$th root of unity in $F^\times$.  With the notation of \itemref{1}--\itemref{5}, we define
$$
\mathcal{S}_s(K)= \bigcup_{\substack{I\in \mathcal{P}\smallsetminus\{\emptyset\}\\I=\{i_0<i_1<\cdots<i_k\}}} \bigcup_{d\in \mathcal{D}_s} \bigcup_{\ell=0}^{\gcd(i_0,t_s)-1} K(t_s,I; d\zeta^\ell).
$$
\end{definition}

\begin{theorem} \label{Main:general}
Suppose $F$ contains a  primitive $n$th root of unity $\zeta$ and that $m=ns$ for some integer $s$.  Let ${t_s}>0$ be the least positive integer such that $\zeta^{t_s}\in N_{K/F}(K^\times)^s$.
 Then for fixed $K$ and $\sigma$, the distinct
 $F$-equivalence classes of the  Petit algebras

$$\mathbb{S}_{t^m-a}=K[t;\sigma]/K[t;\sigma](t^m-a) \quad (a\neq 0)$$
are represented exactly by the elements $a\in\mathcal{S}_s(K)$. That is, for every $b\in K^\times$ there exists a unique $a\in \mathcal{S}_s(K)$ such that
$\mathbb{S}_{t^m-b} \cong \mathbb{S}_{t^m-a}$
via an isomorphism of the form $G_{\tau,\alpha}$ for some $\tau\in \Aut(K/F)$ and $\alpha\in K^\times$.
\end{theorem}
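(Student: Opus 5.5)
The plan is to reduce, via Theorem~\ref{T:parametrizingnm}, to a computation of orbits of an explicit abelian group acting on coefficient vectors. By that theorem, $a\simeq b$ if and only if $a=\sigma^j(b)\,c$ for some $j\in\mathbb{Z}$ and some $c\in\normKFim^s$. The group $H=\langle\sigma\rangle\times\normKFim^s$ acts on $K^\times$, and the $F$-equivalence classes are exactly the $H$-orbits. By Lemma~\ref{IJdistinct} together with Lemma~\ref{lemma:KminusFisP0}, each $K(I)$ with $I\neq\emptyset$ is $H$-stable, and $K^\times$ is the disjoint union of these sets. So it suffices to show, for each fixed nonempty $I=\{i_0<\cdots<i_k\}$, that
\[
\bigcup_{d\in\mathcal{D}_s}\ \bigcup_{\ell=0}^{\gcd(i_0,t_s)-1} K(t_s,I;d\zeta^\ell)
\]
is a set of orbit representatives for $H$ acting on $K(I)$. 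I identify $a=\sum_{r}a_{i_r}\beta^{i_r}\in K(I)$ with its coefficient vector $(a_{i_0},\dots,a_{i_k})\in F^{[\times (k+1)]}$. In these coordinates, $(\sigma^j,c)$ acts by $a_{i_r}\mapsto c\,\zeta^{j i_r}a_{i_r}$.

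\emph{Step 1 (normalizing the leading coefficient).} I project onto the first coordinate. The image of $H$ acting on $a_{i_0}$ is multiplication by the subgroup $\langle\zeta^{i_0}\rangle\normKFim^s$ of $F^\times$. By Lemma~\ref{L:FmunNormsrepresentatives}, $a_{i_0}\equiv \zeta^{r}d$ modulo $\normKFim^s$ for a unique $d\in\mathcal{D}_s$ and a unique $0\le r<t_s$. Modulo $\normKFim^s\supset\langle\zeta^{t_s}\rangle$, multiplying by $\zeta^{j i_0}$ shifts $r$ by elements of $\langle i_0,t_s\rangle=\gcd(i_0,t_s)\mathbb{Z}$ taken modulo $t_s$. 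So every orbit contains an element whose $i_0$-coefficient equals exactly $d\zeta^\ell$ with $0\le\ell<\gcd(i_0,t_s)$, after absorbing the remaining norm factor into $c$. The pair $(d,\ell)$ is an invariant of the orbit, because $\mu_n$ and $\normKFim^s$ together cannot change the class of $d$ in $F^\times/\mu_n\normKFim^s$, and they cannot change $r$ modulo $\gcd(i_0,t_s)$. When $k=0$ this step already finishes the proof for $I$.

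\emph{Step 2 (the stabilizer of the leading coefficient).} This is the step I expect to be the crux. I need the subgroup of $H$ fixing $a_{i_0}$ exactly, namely the pairs with $c\,\zeta^{j i_0}=1$. Such a pair requires $\zeta^{-j i_0}=c\in\mu_n\cap\normKFim^s=\langle\zeta^{t_s}\rangle$, by \eqref{E:t_0}. That is, $t_s\mid j i_0$, which is equivalent to $j\in \frac{t_s}{\gcd(i_0,t_s)}\mathbb{Z}$, and then $c$ is determined by $j$. Such an element acts on the remaining coordinates by $a_{i_r}\mapsto \zeta^{-ji_0}\zeta^{ji_r}a_{i_r}=\zeta^{j(i_r-i_0)}a_{i_r}$. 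This is exactly componentwise multiplication by the element of $\Delta_{t_s,I}$ with parameter $\ell=j$ in Definition~\ref{D:DeltatsI}. Conversely, every element of $\Delta_{t_s,I}$ arises in this way.

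\emph{Step 3 (assembling).} Two elements of $K(I)$ with the same normalized leading coefficient $d\zeta^\ell$ lie in the same $H$-orbit if and only if they differ by an element of this stabilizer. That holds if and only if their tails $(a_{i_1},\dots,a_{i_k})$ lie in the same coset of $\Delta_{t_s,I}$ in $F^{[\times k]}$. Hence each orbit meets the set of elements with leading coefficient $d\zeta^\ell$ in exactly one $\Delta_{t_s,I}$-coset of tails, and so meets $K(t_s,I;d\zeta^\ell)$ in exactly one element, since $\mathcal{E}_{s,I}$ is a set of coset representatives. Combined with the invariance of $(d,\ell)$ from Step 1, this gives existence and uniqueness of the representative in $\mathcal{S}_s(K)$ for every $b\in K^\times$.
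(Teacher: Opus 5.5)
Your proposal is correct and follows essentially the same route as the paper. Both reduce to a fixed nonempty $I$. Both normalize the $\beta^{i_0}$-coefficient to $d\zeta^\ell$ with $0\le\ell<\gcd(i_0,t_s)$, and both show that $(d,\ell)$ is invariant. Both then identify the remaining freedom, namely the pairs $(\sigma^j,\zeta^{-ji_0})$ with $j\in\frac{t_s}{\gcd(i_0,t_s)}\mathbb{Z}$, with $\Delta_{t_s,I}$ acting on the tail coefficients. Your stabilizer formulation simply packages the paper's explicit B\'ezout computation more compactly.
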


\subsection{Proof of Theorem~\ref{Main:general}, and consequences}\label{SS:proof}

\begin{proof}[Proof of Theorem~\ref{Main:general}]
The objective is to implement Theorem~\ref{T:parametrizingnm}.  We first find a representatives for the set of $F$-equivalence classes, and then further quotient by the Galois group to obtain representatives for the isometry classes.
By
Lemma~\ref{IJdistinct}, it suffices to partition $K(I)$ into $F$-equivalence classes under $\simeq$ for each fixed nonempty $I\in \mathcal{P}$.  Therefore let us fix such an element   $I=\{i_0<i_1<\cdots<i_k\}$ for the remainder of the proof.

\emph{Our first step is to split the set $K(I)$ into equivalence classes with respect to multiplication by $\normKFim^s$.}  By Section \ref{sec:isometries}, at the level of Petit algebras, these correspond to isomorphisms of the form $G_{\id,\alpha}$ with $\alpha\in K^\times$.

Define for each $a_{i_0}\in F^\times$ the subset
$$
K(I)(a_{i_0}):=\{ a_{i_0}\beta^{i_0} + \sum_{j=1}^k a_{i_j}\beta^{i_j} | (a_{i_1},\ldots, a_{i_k})\in F^{[\times k]}\}\subset K(I)
$$
of elements of $K(I)$ whose first nonzero coefficient with respect to the ordered basis $B$ is $a_{i_0}$.  This is a partition of $K(I)$.

Multiplication by elements of $F^\times$ permutes the parts of this partition; in particular,  for every $\nu\in  \normKFim^s$ we have $\nu K(I)(a_{i_0})=K(I)(\nu a_{i_0})$.  Recall that by Lemma~\ref{L:FmunNormsrepresentatives}, each $a_{i_0}\in F^\times$ can be written uniquely as $d\zeta^i \nu$ for some $d\in \mathcal{D}_s$, $0\leq i < t_s$, and $\nu \in \normKFim^s$.  Thus for each $a\in K(I)(a_{i_0})$, we have $\nu^{-1}a \in K(I)(d\zeta_i)$, so $a\in b\normKFim^s$ for some $b\in K(I)(d\zeta_i)$.
Furthermore, no two elements of
$$
\bigcup_{d\in \mathcal{D}_s}\bigcup_{i=0}^{t_s-1}K(I)(d\zeta^i)
$$
are in the same coset of $K^\times/N_{K/F}(K^\times)^s$: if they lie in different sets $K(I)(d\zeta^i)$, then their leading terms  lie in different cosets of $F^\times/N_{K/F}(K^\times)^s$; and if they lie in the same set $K(I)(d\zeta^i)$, then as their first coefficients are equal, they cannot be $F^\times$-multiples of one another unless they are equal.

\emph{We now address the action of $\Gal(K/F)$, to account for all isometries, not just equivalences.}  We have already noted that $\Gal(K/F)$ preserves $K(I)$.  Given a subset of the form $K(I)(d\zeta^i)$, we first compute that for  every $k\in \bbZ$, we have
$$
\sigma^k(d\zeta^i\beta^{i_0}) = d\zeta^{i+ki_0}\beta^{i_0}.
$$
Therefore we infer that $\sigma^k(K(I)(d\zeta^i))=K(I)(d\zeta^{i+ki_0})$.  We next show there exists a minimal value $\ell$ such that $K(I)(d\zeta^{i+ki_0})\subseteq K(I)(d\zeta^\ell)\cdot \normKFim^s$ for every $k\in \ZZ$.

Set $\ell = i\mod \gcd(i_0,t_s)$; then $0\leq \ell < \gcd(i_0,t_s)$.  We first claim there exist integers $x,y$ such that $i+xi_0 = \ell+yt_s$.  Namely, choose integers $j,j'$ such that
$$
ji_0 + j't_s = \gcd(t_s,i_0).
$$
Then since $\ell-i$ is a multiple of $\gcd(t_s,i_0)$, there exists an integer $j''$ such that
$$
\ell - i = j''(ji_0 + j't_s)
$$
whence $\ell+(-j''j')t_s = i+(j''j)i_0$.  Thus taking $x=j''j$ and $y=-j''j'$ gives the desired equality $i+xi_0 = \ell+yt_s$.

For an arbitrary element $a\in K(I)(d\zeta^i)$, we have
\begin{align*}
    \sigma^x(a) &= d\zeta^{i+xi_0}\beta^{i_0} + \sum_{j=1}^k a_{i_j}\zeta^{xi_j}\beta^{i_j}\\
    &= d\zeta^{\ell+yt_s}\beta^{i_0} + \sum_{j=1}^k a_{i_j}\zeta^{xi_j}\beta^{i_j}\\
    &= (\zeta^{t_s})^y \left(d\zeta^{\ell}\beta^{i_0} + \sum_{j=1}^k a_{i_j}\zeta^{xi_j-yt_s}\beta^{i_j}\right) =: (\zeta^{t_s})^yb,
\end{align*}
where $b\in K(I)(d\zeta^\ell)$.
Thus since by definition $\zeta^{t_s}\in \normKFim^s$ and $\sigma^x\in\Gal(K/F)$, we deduce that $a\simeq b$ where  $b\in K(I)(d\zeta^\ell)$.

Thus we have shown that every $a\in K(I)$ is equivalent to an element of $K(I)(d\zeta^\ell)$ for some $d\in \mathcal{D}_s$ and $0\leq \ell<\gcd(i_0,t_s)$.

Moreover, elements of distinct such sets are not equivalent. Indeed, suppose
$a\in K(I)(d\zeta^{\ell})$ and $a'\in K(I)(d'\zeta^{\ell'})$ with $d,d'\in \mathcal{D}_s$,
$0\leq \ell,\ell'<\gcd(i_0,t_s)$, and $a\simeq a'$. Then $a'=\sigma^z(a)\nu$ for some
$z\in\mathbb{Z}$ and $\nu\in N_{K/F}(K^\times)^s$. Comparing the coefficients of $\beta^{i_0}$
gives
\[
d'\zeta^{\ell'}=d\zeta^{\ell+zi_0}\nu.
\]
Hence $d'\in d\,\mu_nN_{K/F}(K^\times)^s$, so $d=d'$ since $\mathcal{D}_s$ is a set of coset
representatives for $F^\times/\mu_nN_{K/F}(K^\times)^s$. It follows that
$\nu=\zeta^{\ell'-\ell-zi_0}\in\mu_n\cap N_{K/F}(K^\times)^s=\langle\zeta^{t_s}\rangle$. As $t_s\mid n$, this means $t_s\mid(\ell'-\ell-zi_0)$, so
$\ell'\equiv\ell \bmod \gcd(i_0,t_s)$, whence $\ell=\ell'$.

Fix an element $d\zeta^\ell$ with $d\in \mathcal{D}_s$ and $0\leq \ell < \gcd(i_0,t_s)$. It remains to establish when two elements $b,c\in K(I)(d\zeta^\ell)$ are
equivalent.  If $|I|=1$ then $K(I)(d\zeta^\ell)=\{d\zeta^\ell\beta^{i_0}\}$ is a singleton and there is nothing to show.  Otherwise, we have that $b\simeq c$ if and only if there exist $\nu\in \normKFim^s$ and $z\in \mathbb{Z}$ such that $c=\sigma^z(b)\nu$.  As $b$ and $c$ have the same first term $d\zeta^\ell\beta^{i_0}$, if $c=\sigma^z(b)\nu$ then  $$d\zeta^\ell\beta^{i_0}= \sigma^z(d\zeta^\ell\beta^{i_0})\nu = d\zeta^\ell\zeta^{zi_0}\nu\beta^{i_0}$$ so $\zeta^{zi_0}\nu=1$, that is, $\nu=\zeta^{-zi_0}$.  But since $\nu=\zeta^{-zi_0}\in \normKFim^s$ we must have that $zi_0$ is a multiple of $t_s$.  This happens if and only if  $z$ is a multiple of $t':=t_s/\gcd(i_0,t_s)$.  Therefore $b\simeq c$ if and only if there exists $z$ divisible by $t'$ such that $\sigma^z(b)\zeta^{-zi_0}=c$.
We compute explicitly that
$$
\sigma^z(b)\zeta^{-zi_0}
=\zeta^{-zi_0}\Big(d\zeta^{\ell+zi_0}\beta^{i_0}+\sum_{j=1}^k b_{i_j}\zeta^{zi_j}\beta^{i_j}\Big)
= d\zeta^{\ell}\beta^{i_0}+\sum_{j=1}^k b_{i_j}\zeta^{z(i_j-i_0)}\beta^{i_j}.
$$
We thus conclude that for two elements of $K(I)(d\zeta^\ell)$ we have
$$
d\zeta^{\ell}\beta^{i_0}+\sum_{j=1}^k b_{i_j}\beta^{i_j}\ \simeq\
d\zeta^{\ell}\beta^{i_0}+\sum_{j=1}^k c_{i_j}\beta^{i_j}
$$
if and only if we have the equality of vectors
$$
(c_{i_1},c_{i_2},\ldots, c_{i_k}) = (b_{i_1}\zeta^{z(i_1-i_0)}, b_{i_2}\zeta^{z(i_2-i_0)}, \ldots, b_{i_k}\zeta^{z(i_k-i_0)})
$$
for some $z\in t'\ZZ$, in other words, if and only if they are in the same coset of $\Delta_{t_s,I}$ (from Definition~\ref{D:DeltatsI}).
Consequently, choosing $(c_{i_1},\ldots,c_{i_k})$ to be an element of a set of representatives $\mathcal{E}_{s,I}$ for $F^{[\times k]}/\Delta_{t_s,I}$ gives a unique choice of $c\in K(t_s, I; d\zeta^\ell)$ equivalent to $b$,  and hence $c$ is the unique element of $\mathcal{S}_s(K)$ equivalent to $a$, as required.
\end{proof}

Observe from the proof that the Galois actions on the various parts $K(I)$, where $I=\{i_0<i_1<\cdots<i_k\}$ depends on the value of $i_0$.  For example, if $i_0=0$ then each $K(I)(d\zeta^i)$ is preserved by $\Gal(K/F)$ whereas if $i_0=1$ then $\Gal(K/F)$ acts transitively on the collection of subsets  $\{K(I)(d\zeta^j)\mid 0\leq j < t_s\}$ modulo $N_{K/F}(K^\times)^s$.  More generally the size of this orbit under the Galois action depends on $\gcd(i_0,t_s)$.

We can specialize our preceding result to the important special case of proper nonassociative cyclic algebras.

To state it succinctly, recall that the algebras are cyclic if $m=n$, that is, $s=1$.  Further, from Lemma~\ref{lemma:KminusFisP0}, the proper nonassociative Petit algebras are parametrized by $a\notin F$ so we restrict to $I\in \mathcal{P}_0=\mathcal{P}\smallsetminus \{\emptyset, \{0\}\}$.  Thus define $\mathcal{S}_1^\circ(K)=\mathcal{S}_1(K) \smallsetminus F$,
that is,
$$
\mathcal{S}_1^\circ(K) = \bigcup_{\substack{I\in \mathcal{P}_0\\I=\{i_0<i_1<\cdots<i_k\}}} \bigcup_{d\in \mathcal{D}_1} \bigcup_{\ell=0}^{\gcd(i_0,t_s)-1} K(t_s,I; d\zeta^\ell).
$$

\begin{corollary}\label{C:nonassociativecyclic}
    Suppose $m=n$ and that $F$ contains a primitive $n$th root of unity.  Then the distinct $F$-isomorphism classes of proper nonassociative cyclic algebras
    $$(K/F,\sigma,a)=
    \Kone{a}
    $$
    with $a\in K\smallsetminus F$ are represented by the triples $(K/F,\sigma,a)$ where $K$ is a cyclic Galois field extension of degree $n$, $\sigma$ is a generator of $\Gal(K/F)$, and $a\in \mathcal{S}_1^\circ(K)$.
\end{corollary}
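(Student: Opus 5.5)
The corollary follows by combining three earlier results: the $s=1$ case of Theorem~\ref{Main:general}, the restriction to $a\notin F$ via Lemma~\ref{lemma:KminusFisP0}, and the separation of different $K$ and different $\sigma$ via Lemma~\ref{Lem:differentextensions} and Theorem~\ref{thm:2.3}. The plan is first to isolate the data $(K,\sigma)$, and then to classify $a$ for fixed $(K,\sigma)$.

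\emph{Step 1 (the pair $(K,\sigma)$ is an invariant).} Since $a\in K\smallsetminus F$ and $m=n$, each algebra $(K/F,\sigma,a)$ is proper nonassociative, with ${\rm Nuc}_l=K$. If $(K/F,\sigma,a)\cong (K'/F,\sigma',a')$, then Lemma~\ref{Lem:differentextensions} forces $K=K'$. With $K$ fixed, Theorem~\ref{thm:2.3}(ii), applied with $F_0=F$ (so $\Aut_{F_0}(K)=\Gal(K/F)$ is cyclic, hence abelian), forces $\sigma=\sigma'$. So distinct pairs $(K,\sigma)$ never give isomorphic algebras.

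\emph{Step 2 (fixed $K$ and $\sigma$).} With $K$ and $\sigma$ fixed, I would use that for $m=n$ every $F$-isomorphism between these algebras is an equivalence $G_{\tau,\alpha}$, i.e.\ it satisfies \eqref{equivrelation}. This should follow from Theorem~\ref{T:classifyisomorphism}. For $m=n$ the condition \eqref{stareq} ought to hold automatically: an isomorphism sends $t$ to an element $\alpha t^k$ with $k\le m-1$, and for $m=n$ the only admissible degree is $k=1$, as in the classical cyclic algebra case of \cite{BrownPumpluen2018}.

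This is the step I expect to be the main obstacle. If $k>1$ is not immediately excluded, I would check \eqref{stareq} directly: writing $s'$ for the least integer with $s'k\ge m$, the requirement is $(s'+1)k\le 2m$. Since $(s'-1)k<m$ and $k<m$, we get $(s'+1)k = (s'-1)k + 2k < m+2k$. That bound is not quite $2m$, so I would instead fall back on the citation \cite[Corollary 6.4]{NevinsPumpluen2025submitted}, which treats the cyclic case.

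Granting this, $F$-isomorphism coincides with $F$-equivalence. Theorem~\ref{Main:general} with $s=1$ then says that every $b\in K^\times$ is equivalent to a unique $a\in\mathcal{S}_1(K)$.

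\emph{Step 3 (restricting to $a\notin F$).} Lemma~\ref{IJdistinct} shows that equivalence preserves the part $K(I)$, and Lemma~\ref{lemma:KminusFisP0} says $K\smallsetminus F$ is the union of $K(I)$ over $I\in\mathcal{P}_0$. Hence $b\in K\smallsetminus F$ has its unique representative in $\mathcal{S}_1(K)\cap(K\smallsetminus F)=\mathcal{S}_1^\circ(K)$.

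Combining Steps 1–3 gives the stated complete and irredundant set of triples.
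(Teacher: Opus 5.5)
Your proposal is correct and follows the paper's own route. You separate $(K,\sigma)$ via Lemma~\ref{Lem:differentextensions} and Theorem~\ref{thm:2.3}, invoke an external result that for $m=n$ every $F$-isomorphism is an equivalence $G_{\tau,\alpha}$, apply Theorem~\ref{Main:general} with $s=1$, and discard the classes in $K(\{0\})$ via Lemmas~\ref{lemma:KminusFisP0} and~\ref{IJdistinct}.

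The only difference is in Step 2, where the paper cites \cite[Corollary 6.2]{NevinsPumpluen2025submitted} rather than the Corollary 6.4 you name. Your heuristic $G(t)=\alpha t^k$ presupposes monomiality, and \eqref{stareq} is indeed not automatic (e.g.\ $m=4$, $k=3$). So you are right that the citation, not that heuristic, carries this step.
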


\begin{proof}
    By Lemma \ref{Lem:differentextensions} and Theorem \ref{thm:2.3},
    if
    $$
    (K/F,\sigma,a)\cong (K'/F,\sigma',a')
    $$
    then we must have $K=K'$ and $\sigma=\sigma'$.  By \cite[Corollary 6.2]{NevinsPumpluen2025submitted},
    every  isomorphism between such algebras is an equivalence of the form $G_{\tau,\alpha}$.
    As noted in Lemma~\ref{lemma:KminusFisP0}, the algebras corresponding to $a\in F^\times$ correspond to equivalence classes in $K(\{0\})$.  The result now follows from Theorem~\ref{Main:general}.
\end{proof}

\begin{remark}\label{re:correction}
This corollary was previously known  only  when  $n=m$ is prime, and proved in \cite[Theorem 5.5]{NevinsPumpluen2025},  where there was a small mistake in the definition of the set $\mathcal{S}_1^\circ(K)$ for the case when $\zeta\not \in N_{K/F}(K^\times)$: in \cite[\S5]{NevinsPumpluen2025}, the set $\mathcal{D}_1$ of representatives of $F^\times/\mu_n\normKFim$ was omitted. While this is trivially the case in our setup when working with finite fields, it does not hold in general.
\end{remark}

In fact, although Theorem~\ref{Main:general} applies to Petit algebras of the form $\Kone{a}$, we can obtain some information about the existence of isomorphisms between \emph{general} Petit algebras, as a consequence of Corollary~\ref{cor:equivpolycyclic1} and Lemma~\ref{IJdistinct}, as follows.

\begin{corollary}\label{cor:generalcase}
 Suppose $F$ contains a primitive $n$th root of unity $\zeta$, and that $f(t)=t^m-\sum_{j=0}^{m-1}a_jt^j$ and $g(t) = t^m-\sum_{j=0}^{m-1}b_jt^j$ are elements of $K[t;\sigma]$ such that there exists  an isomorphism of  $F$-algebras $G_{\tau,\alpha}:\mathbb S_f \to \mathbb S_g$ for some $\tau\in \Aut(K/F)$ and $\alpha\in K^\times$.   For each $j$ let $I_j\in \mathcal{P}$ denote the subset such that $a_j\in K(I_j)$.  Then for each $j$ such that $n\mid (m-j)$ and $a_j\neq 0$, we have $b_j\in K(I_j)$.
\end{corollary}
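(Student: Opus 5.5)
The plan is to reduce the general coefficient-wise statement to the monomial case and then apply Lemma~\ref{IJdistinct}. Fix $j$ with $n\mid(m-j)$ and $a_j\neq 0$, and write $m-j=ns_j$ with $s_j\in\mathbb{N}$; note $s_j\geq 1$ since $j\leq m-1$. By Corollary~\ref{cor:equivpolycyclic1}, the isomorphism $G_{\tau,\alpha}:\mathbb S_f\to\mathbb S_g$ induces, with $\beta_j=\sigma^j(\alpha)$, an isomorphism
$$G_{\tau,\beta_j}:K[t;\sigma]/K[t;\sigma](t^{m-j}-a_j)\to K[t;\sigma]/K[t;\sigma](t^{m-j}-b_j).$$
More concretely, the proof of that corollary gives the coefficient identity $\tau(a_j)=N^\sigma_{m-j}(\beta_j)\,b_j$. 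I will use this identity directly rather than the algebra isomorphism, since for $m-j=1$ the ``Petit algebra'' has degree one and is degenerate, and the identity sidesteps that issue.

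First, I use the identity to see that $b_j\neq 0$: since $a_j\neq 0$, $\tau(a_j)\neq 0$, and hence $b_j\neq 0$. Second, since $m-j=ns_j$, relation \eqref{E:Normrelation} shows $N^\sigma_{ns_j}(\beta_j)=N_{K/F}(\beta_j)^{s_j}\in N_{K/F}(K^\times)^{s_j}\subset F^\times$. So $\tau(a_j)=\nu b_j$ with $\nu\in N_{K/F}(K^\times)^{s_j}$. Because $\tau\in\Gal(K/F)=\langle\sigma\rangle$, this says precisely that $a_j\simeq b_j$ in the sense of Section~\ref{sec:prelim}, taken with respect to the exponent $s_j$ in place of $s$.

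Third, I apply the argument of Lemma~\ref{IJdistinct}. By Lemma~\ref{lemma:KminusFisP0}, $K(I_j)$ is invariant under $\Gal(K/F)$ and under multiplication by $F^\times$. Since $a_j\in K(I_j)$, we get $\tau(a_j)\in K(I_j)$, and therefore $b_j=\nu^{-1}\tau(a_j)\in K(I_j)$ because $\nu^{-1}\in F^\times$.

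I expect the only subtle point to be the bookkeeping. The lemma is stated for a fixed $s=m/n$, whereas here the relevant exponent $s_j$ varies with $j$. The invariance argument does not depend on $s$ at all, only on the facts that $N_{K/F}(K^\times)^{s_j}\subset F^\times$ and that $\tau$ is a power of $\sigma$, so I will phrase the step via Lemma~\ref{lemma:KminusFisP0} directly.
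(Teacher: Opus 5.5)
Your proposal is correct and follows essentially the same route as the paper: you obtain the coefficient identity $\tau(a_j)=N^\sigma_{m-j}(\sigma^j(\alpha))\,b_j$ via Corollary~\ref{cor:equivpolycyclic1}, note that this norm lies in $F^\times$ when $n\mid(m-j)$, and conclude using the $\Gal(K/F)$- and $F^\times$-invariance of $K(I_j)$. Your explicit handling of the varying exponent $s_j=(m-j)/n$ is slightly more careful than the paper's appeal to $\simeq$, which is defined there only for the fixed $s=m/n$.
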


\begin{proof}
    By Corollary~\ref{cor:equivpolycyclic1}, the existence of such an isomorphism $G_{\tau,\alpha}:\mathbb S_f\to \mathbb S_g$ implies that for each $j$ with $(a_j,b_j)\neq (0,0)$ we have an isomorphism
    $$
    G_{\tau,\sigma^j(\alpha)}:\mathbb S_{t^{m-j}-a_j} \to \mathbb S_{t^{m-j}-b_j}.
    $$
    In particular, we have that $a_j\simeq b_j$.  When $n\mid m-j$, we are in the setting that $N^\sigma_{m-j}(K^\times)\subset F^\times$ and thus $a_j\in K(I_j)$ if and only if $b_j\in K(I_j)$.
\end{proof}

\section{Application to local nonarchimedean fields}  \label{sec:local}
In this section, we demonstrate how to evaluate the theory developed in Section~\ref{SS:parametrization}
explicitly for a given family of fields, namely where  $F$ is a local nonarchimedean field.  We first give a concise introduction to these fields  in Section~\ref{sec:p-adic fields}, then prove some needed results about Kummer extensions in Section~\ref{SS:Kummerpadic}, and then carry out the parametrization in Section~\ref{SS:pametrizationpadic}.

\subsection{Local nonarchimedean  fields} \label{sec:p-adic fields}

A  locally compact, complete, nondiscrete field equipped with an ultrametric norm $\vert \cdot \vert$ is called a \emph{local nonarchimedean field}.  For each prime number $p$, there are precisely two classes of such fields \cite[Chapter II]{Lang1994}:  the finite algebraic extensions of the $p$-adic numbers $\Q_p$, which are the completion of a number field with respect to the $p$-adic norm and have characteristic zero; and the fields $\F_q(\!(t)\!)$ of Laurent series over a finite field of order $q=p^\gamma$ for some $\gamma\geq 1$, which have characteristic $p$.

Each  local nonarchimedean field $F$ has a maximal compact subring, called its integer ring, given by  $\RR=\{x\in F : |x|\leq 1\}$.  This is a local ring with unique maximal ideal $\PP = \{x \in F : |x|<1\}$.  Let $\varpi$ denote a generator of $\PP$, that is, $\PP=\varpi \RR$. The quotient ring $\mathfrak{f} :=\RR/\PP$  is called the residue field.  It is a finite field of order $q=p^\gamma$ for some $\gamma\in \N$, and its characteristic $p$ is called the residual characteristic of $F$.

The valuation of an element $a\in F^\times$ is normalized so that $\val(a)=\max\{n\in \Z: a\in \PP^n\}$ and we scale the norm so that $|a| = q^{-\val(a)}$.  With this normalization we have $\val(\varpi)=1$.

The following general version of Hensel's Lemma is adapted from \cite[Thm 7.3]{Eisenbud1995}.

\begin{lemma}[Hensel's Lemma] \label{le:Hensel}
Let $f(x)\in \RR[x]$ be a polynomial and $f'(x)$ its formal derivative.  If $a\in \RR$ satisfies
$$
f(a) \equiv 0 \mod f'(a)^2\PP,
$$
then there exists a unique $b\in \RR$ satisfying
$$
f(b)=0 \quad \text{and}\quad b\equiv a \mod f'(a)\PP.
$$
\end{lemma}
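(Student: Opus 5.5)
This is the standard Hensel lemma argument, run through Newton's iteration. Set $\delta = f'(a)$ and $e = f(a)/\delta^2$. The hypothesis gives $e\in\PP$. If $\delta=0$, the hypothesis forces $f(a)=0$ and the congruence $b\equiv a \bmod 0$ forces $b=a$, so we may assume $\delta\neq 0$.

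We look for a root in the form $b = a + \delta y$ with $y\in\PP$. Taylor expansion in $\RR[x]$ gives
\[
f(a+\delta y) = f(a) + f'(a)\delta y + \delta^2 y^2 g(y) = \delta^2\bigl(e + y + y^2 g(y)\bigr)
\]
for some polynomial $g$ with coefficients in $\RR$. The coefficients of $g$ are the values $f^{(k)}(a)/k!$, which lie in $\RR$ because $f\in\RR[x]$; this holds in any characteristic by working with Hasse derivatives. The problem therefore reduces to solving $h(y) := e + y + y^2g(y) = 0$ with $y\in\PP$.

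The key step is that $h$ satisfies the strong Hensel hypothesis: $h(0)=e\in\PP$ and $h'(0)=1$ is a unit. We run Newton's iteration $y_{j+1} = y_j - h(y_j)/h'(y_j)$ starting from $y_0=0$. By induction we show the following:
\begin{itemize}
\item $y_j\in\PP$;
\item $h'(y_j)\in 1+\PP$ is a unit;
\item $v(h(y_j))\geq 2^j v(e)$.
\end{itemize}
The last bound comes from the Taylor estimate $h(y_{j+1}) = h(y_j) - h'(y_j)\,\tfrac{h(y_j)}{h'(y_j)} + O\bigl(h(y_j)^2\bigr)$, which gives $v(h(y_{j+1}))\geq 2v(h(y_j))$. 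The differences $y_{j+1}-y_j$ then tend to zero, so $(y_j)$ is Cauchy. Completeness of $F$ (it is locally compact) and closedness of $\PP$ give a limit $y\in\PP$, and continuity of polynomials gives $h(y)=0$. Then $b=a+\delta y$ satisfies $f(b)=0$ and $b\equiv a \bmod \delta\PP$.

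For uniqueness, suppose $y,y'\in\PP$ are both roots of $h$. Then
\[
0 = h(y)-h(y') = (y-y')\bigl(1 + (y+y')g(y) + y'^2\,\tilde g(y,y')\bigr)
\]
for some polynomial $\tilde g$ over $\RR$. The second factor lies in $1+\PP$, so it is a unit, forcing $y=y'$. Since $\delta\neq 0$, the map $y\mapsto a+\delta y$ is injective, so $b$ is unique among roots congruent to $a$ modulo $\delta\PP$.

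The only delicate point is bookkeeping: justifying that the Taylor coefficients are integral, and that the quadratic convergence estimate holds uniformly. Both follow because all quantities stay in $\RR$ and $h'$ stays a unit along the iteration. Alternatively, the whole statement can simply be quoted from \cite[Thm 7.3]{Eisenbud1995}, since $\RR$ is a complete local ring.
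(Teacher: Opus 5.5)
Your argument is correct, and it is essentially the standard proof. The paper gives no proof of its own: it adapts \cite[Thm 7.3]{Eisenbud1995}, whose proof is the same Newton iteration. Your rescaling $h(y)=\delta^{-2}f(a+\delta y)$ to the case $h'(0)=1$, plus your separate treatment of $f'(a)=0$, supplies exactly what the citation leaves implicit; the latter is the one case where Eisenbud's uniqueness clause, which assumes $f'(a)$ is a nonzerodivisor, needs a comment. One cosmetic slip: the coefficients of $g$ are $f^{[k]}(a)\,\delta^{k-2}$ (Hasse derivatives times powers of $\delta$), not $f^{(k)}(a)/k!$, but they lie in $\RR$ either way, so nothing changes.
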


As an important application, note that the elements of $\mathfrak{f}^\times$ are the roots of $f(x)=x^{q-1}-1$.  By Hensel's Lemma, each of these corresponds to a unique root of $f(x)$ in $\RR$, which we call the Teichm\"uller lifts.  They form the unique subgroup $\mu_{q-1}$ of $(q-1)$th roots of unity in $F^\times$.  (Recall that this subgroup contains $\mu_n$ for any $n|(q-1)$.)

It follows that we may write $F^\times$ as the direct product of groups
\begin{equation}\label{E:productform}
F^\times \cong \mu_{q-1} \times (1+\PP) \times \Z
\end{equation}
via the map that associates to a triple $(b, 1+u\varpi, n)$  the element $b(1+u\varpi) \varpi^n$.

Now let  $K$ be a finite algebraic extension of $F$ of degree $n$.  This is again a local nonarchimedean field. Let $\mathfrak{k}$ be its residue field; it is an extension of $\mathfrak{f}$ of some degree $f$ called the inertial degree. Let $\varpi_K$ be a uniformizer of $K$; then if we normalize the valuation on $K$ so that $\val(\varpi)=1$ then $\val(\varpi_K)=1/e$ for some positive integer $e$ called the ramification degree.  We have $n=ef$.

The extension is called unramified if $e=1$ (in which case one may take $\varpi_K=\varpi$) and totally ramified if $f=1$ (in which case one may identify $\mathfrak{f}=\mathfrak{k}$).  Between these extremes we say the extension is ramified.  When $\gcd(e,p)=1$ the ramification is called tame (and wild otherwise).  The maximal unramified subextension $E$ of $F$ in $K$ has degree $f$ over $F$ and $K/E$ is totally ramified.

Each totally and tamely ramified extension $K$ of $F$ of degree $e$ is obtained by adjoining to $F$ a root of $x^e-u\varpi$, for some choice of $u\in \RR^\times/(\RR^\times)^e$. These extensions are Galois only if $F$ contains a primitive $e$th root of unity.

By local class field theory \cite[XI \S4]{Lang1994}, the abelian Galois extensions of degree $n$ of $F$ are in one-to-one correspondence with the subgroups of index $n$ of $F^\times$, via the map that assigns $K/F$ to the subgroup $\normKFim$, which is the image of the norm map.  In particular, this implies that $F^\times/\normKFim$ is a finite group of order $n$ equal to the degree of the extension.

For example, if $K/F$ is unramified of degree $n$ then $\normKF(\RR_K^\times)=\RR^\times$ and  $F^\times/\normKFim$ is represented by the elements
$$
\{ 1, \varpi, \varpi^2, \cdots, \varpi^{n-1} \}.
$$
If instead $K/F$ is totally ramified of degree $n$ with $\gcd(n,p)=1$, then  $F^\times/\normKFim \cong \RR^\times/(\RR^\times)^n \cong \mathfrak{f}^\times/ \mathfrak{f}^{\times n}$.

\subsection{Kummer extensions of local nonarchimedean fields}\label{SS:Kummerpadic}

Let $F$ be a local nonarchimedean field of residual characteristic $p$ and residue field $\mathfrak{f}$ of order $q=p^\gamma$.  Let $\prootF$ denote a generator of the Teichm\"uller subgroup $\mu_{q-1}\cong \mathfrak{f}^\times$.

  Let $n$ be a positive integer such that $n|(q-1)$.  Then $\prootF^{(q-1)/n}\in F^\times$ is a primitive $n$th root of unity, and all cyclic extensions $K$ of degree $n$ of $F$ are given by Kummer theory.  Note that since $p\nmid n$, each extension of degree $n$ is separable and tamely ramified.

\begin{lemma}\label{lemma:imageofnormmappadic}
    Suppose $n\mid (q-1)$ and $K/F$ is an extension of degree $n=ef$, where $e$ is its degree of ramification and $f$ its inertial degree. Then we have
    $$K=F(\prootK, (\prootK^\ell\varpi)^{1/e})$$
    for some generator $\prootK$ of $\mu_{q^f-1}\subset K^\times$ and integer $\ell$ satisfying $0\leq \ell < e$.   Moreover, there is a choice of generator $\prootF$ of $\mu_{q-1}\subset F^\times$ such that
    $$
    \normKFim = \langle \prootF^e,
    1+\PP_F, (-1)^{f(e-1)}\prootF^\ell\varpi^f\rangle.
    $$
\end{lemma}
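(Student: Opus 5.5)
We would prove the lemma in two stages: first describe $K$ explicitly, then compute the norm group by computing norms of a generating set of $K^\times$.

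\emph{Structure of $K$.} Let $E\subseteq K$ be the maximal unramified subextension. It has degree $f$ and is generated by $\mu_{q^f-1}$, so $E=F(\prootK)$ for a generator $\prootK$ of $\mu_{q^f-1}$. Since $p\nmid n$, the extension $K/E$ is totally and tamely ramified of degree $e$, so $K=E(\pi)$ with $\pi^e=u\varpi$ for some $u\in \RR_E^\times$.

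Write $u=\prootK^{j}(1+x)$ with $x\in\PP_E$. By Hensel's Lemma (Lemma~\ref{le:Hensel}), $1+x$ is an $e$th power because $p\nmid e$, so we may absorb it into $\pi$. Replacing $\pi$ by $\prootK^{c}\pi$ changes $j$ by $ce$, so we may take $j=\ell$ with $0\le \ell<e$.

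Galois-ness of $K/F$ then forces constraints on $\ell$. The $F$-conjugates of $\prootK^\ell\varpi$ under Frobenius must generate the same extension, which gives $\ell q\equiv \ell \bmod e$ up to $e$th powers. We only need the existence of such $\ell$, so these constraints are not needed for the statement. This gives $K=F(\prootK,(\prootK^\ell\varpi)^{1/e})$.

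\emph{Norm group.} Since $K^\times=\langle \prootK\rangle\times(1+\PP_K)\times\pi^{\mathbb Z}$, the norm group is generated by the images of these three factors.

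1. \emph{Principal units.} $N_{K/F}(1+\PP_K)=1+\PP_F$. This holds because the norm is surjective on principal units in tame extensions: $N$ acts as $x\mapsto x^{e}$ composed with the residue trace on successive quotients. Alternatively, $1+\PP_F$ is $n$-divisible by Hensel's Lemma, and $N(y)=y^n$ for $y\in 1+\PP_F$.

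2. \emph{Roots of unity.} We have $N_{K/F}(\prootK)=N_{E/F}(\prootK)^e=\bigl(\prootK^{1+q+\cdots+q^{f-1}}\bigr)^e$. Here $\prootK^{(q^f-1)/(q-1)}$ generates $\mu_{q-1}$; we call it $\prootF$, which is the stated choice of generator. So the image of $\mu_{q^f-1}$ is $\langle\prootF^e\rangle$.

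3. \emph{Uniformizer.} $N_{K/F}(\pi)=N_{E/F}(N_{K/E}(\pi))$. The minimal polynomial of $\pi$ over $E$ is $x^e-\prootK^\ell\varpi$, so $N_{K/E}(\pi)=(-1)^{e-1}\prootK^\ell\varpi$. Applying $N_{E/F}$ gives $(-1)^{f(e-1)}\prootF^\ell\varpi^f$.

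Combining these gives the displayed generating set. As a consistency check, local class field theory (the norm group has index $n=ef$ in $F^\times$) confirms that nothing is missing.

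\emph{Main obstacle.} The delicate step is item 1, surjectivity onto $1+\PP_F$, together with keeping track of which generator $\prootF$ is used. For item 1 we would use the $n$-divisibility argument above, which needs only $p\nmid n$ and Hensel's Lemma, rather than a filtration computation.
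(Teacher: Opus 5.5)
Your proposal is correct and follows the paper's proof essentially step for step. Like the paper, you pass through the maximal unramified subextension $E$, decompose $K^\times=\mu_{q^f-1}\times(1+\PP_K)\times\langle\varpi_K\rangle$, use Hensel's Lemma ($p\nmid n$) to get $N_{K/F}(1+\PP_K)=1+\PP_F$, set $\prootF=\prootK^{(q^f-1)/(q-1)}$ so that the roots of unity map onto $\langle\prootF^e\rangle$, and compute $N_{K/E}$ of a root of $x^e-\prootK^\ell\varpi$ as $(-1)^{e-1}\prootK^\ell\varpi$ before applying $N_{E/F}$. Your explicit reduction of $u=\prootK^j(1+x)$ to $\prootK^\ell$, and your direct use of $\prootK\in E$ to get $N_{K/E}(\prootK)=\prootK^e$, are slightly cleaner than the paper's remarks at those points, but the argument is the same.
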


\begin{proof}
    Let $K$ be an extension of $F$ of degree $n$. Let $E$ denote the maximal unramified extension of $F$ in $K$; then $K/E$ is a totally ramified extension of degree $e$ and $E/F$ is an unramified extension of degree $f$.

    Recall that we can factor the group $K^\times$ as $\mu_{q^f-1} \times (1+\PP_K) \times \langle \varpi_K\rangle$.  We compute the image of the norm map $\normKF=\normEF\circ\normKE$ by determining its image on each of the generating subgroups $1+\PP_K$, $\mu_{q^f-1}$ and $\langle \varpi_K \rangle$ separately.

    First note that since $\gcd(n,p)=1$, Hensel's Lemma gives that every element of $1+\PP_F$ is an $n$th power of an element of $1+\PP_F$.  Since the norm of an element of $1+\PP_K$ must be in $1+\PP_F$, we deduce  $\normKF(1+\PP_K)=1+\PP_F$.

    Next, since $K/E$ is totally ramified, the residue fields of $K$ and $E$ are isomorphic; since $E/F$ has degree $f$, these residue fields have order $q^f$.  Since $\Gal(K/E)$ induces the trivial action of this residue field, it is also trivial on its Teichm\"uller subgroup $\mu_{q^f-1}$.  It follows that the restriction of $\normKE$ to $\mu_{q^f-1}$ is the $e$-power map.  Let  $\prootK$ be a generator of $\mu_{q^f-1}$; then we have $E=F(\prootK)$.   By construction, $\normEF(\prootK) = \prootK^{(q^f-1)/(q-1)}$ generates $\mu_{q-1}$; without loss of generality we may set $\prootF=\prootK^{(q^f-1)/(q-1)}$. Thus we have
    $$
    \normKF(\prootK) = \normEF(\normKE(\prootK))=\normEF(\prootK^e)=\normEF(\prootK)^e.
    $$
Since $e|(q-1)$, we deduce that  $\normKF(\mu_{q^f-1})=\mu_{(q-1)/e}$.

    Finally, we compute $\normKF(\varpi_K)$.  Since $E/F$ is unramified, a uniformizer $\varpi$ of $F$ is also a uniformizer of $E$.  As recapped in \cite[\S2.3]{ChinnerThesis2021}, for example, the totally ramified extensions $K/E$ are given by adjoining a root of $x^e-r\varpi$ for
    some choice of $r\in \intR_E^\times/(\intR_E^{\times})^e$.   Let $\varpi_K=(r\varpi)^{1/e}$ be one root and let $\zeta_e=\prootK^{(q^f-1)/e}$ denote a primitive $e$th root of unity.  Then the distinct roots of this polynomial are $\{\zeta_e^i (r\varpi)^{1/e}\mid 0\leq i <e\}$ whence
    $$
    \normKE(\varpi_K) = \prod_{i=0}^{e-1}(\zeta_e^i (r\varpi)^{1/e})= \zeta_e^{e(e-1)/2} r\varpi = \begin{cases}
        r\varpi & \text{if $e$ is odd}\\
        -r\varpi & \text{if $e$ is even.}
    \end{cases}
    $$
    This sign $\pm$ can be recorded as $(-1)^{(e-1)}$.
    Since $E/F$ is unramified of degree $f$, we have $\normEF(\varpi)=\varpi^f$ and thus
    $$
    \normKF(\varpi_K)=\normEF((-1)^{(e-1)}r\varpi) = (-1)^{f(e-1)}\normEF(r)\varpi^f.
    $$
    This holds for any $r$.  Since $\gcd(e,p)=1$ we may apply Hensel's lemma to deduce that the distinct totally ramified extensions $K/E$ are parametrized by $r \in \mu_{q^f-1}/(\mu_{q^f-1})^e$.  Since $e\mid (q^f-1)$, this quotient is represented by $\{\prootK^\ell\mid 0\leq \ell < e\}$.
    If $r=\prootK^\ell$ then $\normEF(r)=r^{(q^f-1)/(q-1)} = \prootF^\ell$, as required.

    Note that in this notation, we have
    $$K=E((r\varpi)^{1/e})=F(\prootK)((\prootK^\ell\varpi)^{1/e})$$
    where $\prootF=\prootK^{(q^f-1)/(q-1)}$ is a generator of $\mu_{q^f-1}\subset K^\times$ and $0\leq \ell < e$.
\end{proof}

Note that in almost all cases, we have $(-1)^{f(e-1)}\in \mu_{q-1}$, so that  $(-1)^{f(e-1)}\prootF^\ell\varpi^f=\prootF^{\ell+c}\varpi^f$ for some $c\in \{0,\frac12(q-1)\}$.

Note that under our global assumption $n\mid(q-1)$ we always have
$(-1)^{f(e-1)}\in\mu_{q-1}$: if $p$ is odd then $-1\in\mu_{q-1}$, and if $p=2$ then $q-1$
is odd, so $e\mid n$ is odd and $(-1)^{f(e-1)}=1$. Hence
$(-1)^{f(e-1)}\xi_0^\ell\varpi^f=\xi_0^{\ell+c}\varpi^f$ for some
$c\in\{0,\tfrac12(q-1)\}$, where $c=0$ if $p=2$.

\subsection{Explicit parametrization results for local nonarchimedean fields}\label{SS:pametrizationpadic}

We now apply the theory developed in Section~\ref{SS:parametrization} of parametrize the distinct $F$-equivalence classes of algebras of the form $\Kone{a}$ where $K$ is a cyclic Kummer extension of a local nonarchimedean field $F$, where $[K:F]=n=ef$, with $\Gal(K/F)=\langle \sigma \rangle$, under the assumption that $m$ is relatively prime to $p$ and divisible by $n$.
Note that then $s=m/n$ is relatively prime to $p$.   Thus we infer from Section~\ref{SS:Kummerpadic} that
\begin{equation}\label{E:padicimageofnormmaptothes}
    \normKFim^s = \langle \prootF^{es},
    1+\PP_F, \prootF^{s(\ell+c)}\varpi^{fs}\rangle,
\end{equation}
where $c$ is defined by $\prootF^c = (-1)^{f(e-1)}$.

We now follow the steps of Section~\ref{SS:parametrization}.

\itemref{1}: \emph{A primitive $n$th root of unity $\zeta$, and the size $n/t_s$ of the intersection $\mu_n\cap \normKFim^s$.}

Since $n|(q-1)$ we may take $\zeta = \prootF^{(q-1)/n}$, or equivalently (when $\prootK$ and $\prootF$ are chosen compatibly, as in Lemma~\ref{lemma:imageofnormmappadic}) $\zeta = \prootK^{(q^f-1)/n}$.

\begin{lemma}\label{Lemma:tsandmunNorm}
    The intersection $\mu_n\cap \normKFim^s$ has $\displaystyle \frac{\gcd(nes,q-1)}{\gcd(es,q-1)}$ elements, so we have $t_s = \displaystyle \frac{n\gcd(es,q-1)}{\gcd(nes,q-1)}$ and
    $$
    \mu_n\normKFim^s = \langle \prootF^{\gcd(es,(q-1)/n)}, 1+\PP_F, \prootF^{s(\ell+c)}\varpi^{fs}\rangle.
    $$
\end{lemma}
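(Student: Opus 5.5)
The plan is to work entirely inside the direct product decomposition \eqref{E:productform}, $F^\times\cong\mu_{q-1}\times(1+\PP)\times\Z$, using the explicit description \eqref{E:padicimageofnormmaptothes} of $\normKFim^s$. Both $\mu_n$ and $\normKFim^s$ are then described by generators that respect this decomposition, up to the single mixed generator $\prootF^{s(\ell+c)}\varpi^{fs}$. Everything reduces to computing intersections and joins of subgroups of the cyclic group $\mu_{q-1}=\langle\prootF\rangle$.

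\emph{Step 1: isolate the torsion part of $\normKFim^s$.} I first determine the valuation-zero part of $\normKFim^s$. A general element is $\prootF^{esa}u\,(\prootF^{s(\ell+c)}\varpi^{fs})^b$ with $u\in 1+\PP_F$. It has valuation $fsb$, so it lies in $\RR^\times$ exactly when $b=0$. Hence $\normKFim^s\cap\RR^\times=\langle\prootF^{es}\rangle\times(1+\PP_F)$. Since $\mu_n\subset\mu_{q-1}$ meets $1+\PP_F$ trivially (Teichm\"uller lifts are distinct mod $\PP$), projecting to the $\mu_{q-1}$ factor gives
\[
\mu_n\cap\normKFim^s=\mu_n\cap\langle\prootF^{es}\rangle .
\]

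\emph{Step 2: intersect two subgroups of a cyclic group.} Set $d=\gcd(es,q-1)$. Then $\langle\prootF^{es}\rangle=\langle\prootF^{d}\rangle$ has order $(q-1)/d$, and $\mu_n=\langle\prootF^{(q-1)/n}\rangle$ has order $n$. In a cyclic group the intersection of the subgroups of orders $A$ and $B$ is the subgroup of order $\gcd(A,B)$, so the intersection has order $\gcd(n,(q-1)/d)$. It remains to check the identity
\[
\gcd(n,(q-1)/d)=\frac{\gcd(nes,q-1)}{d}.
\]
This follows by writing $\gcd(nes,q-1)=d\cdot\gcd\bigl(n\cdot\tfrac{es}{d},\tfrac{q-1}{d}\bigr)$ and using $\gcd(es/d,(q-1)/d)=1$. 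The formula $t_s=n\gcd(es,q-1)/\gcd(nes,q-1)$ then follows from \eqref{E:t_0}, since $|\mu_n\cap\normKFim^s|=n/t_s$.

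\emph{Step 3: the product group.} The product $\mu_n\normKFim^s$ is generated by the union of the generators $\prootF^{(q-1)/n}$, $\prootF^{es}$, $1+\PP_F$ and $\prootF^{s(\ell+c)}\varpi^{fs}$. The two powers of $\prootF$ generate $\langle\prootF^{\gcd((q-1)/n,\,es,\,q-1)}\rangle=\langle\prootF^{\gcd(es,(q-1)/n)}\rangle$, because $(q-1)/n$ divides $q-1$. This gives the stated presentation.

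The only point needing care is Step 1: the mixed generator $\prootF^{s(\ell+c)}\varpi^{fs}$ cannot contribute roots of unity. The valuation argument handles this, because $\varpi$ has infinite order and nonzero valuation. The rest is routine gcd bookkeeping.
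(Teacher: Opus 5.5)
Your proposal is correct and follows essentially the same route as the paper's proof. The valuation argument kills the mixed generator $\prootF^{s(\ell+c)}\varpi^{fs}$, and the trivial intersection $\mu_{q-1}\cap(1+\PP_F)$ reduces everything to $\mu_n\cap\langle\prootF^{es}\rangle$; the rest is the same gcd bookkeeping in the cyclic group $\mu_{q-1}$, with your derivation of the gcd identity spelled out a bit more explicitly than in the paper.
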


\begin{proof}
    Since $n|(q-1)$, we have $\mu_n\subset \mu_{q-1}$.
    If $a\in \mu_n\cap \normKFim^s$ then by
    \eqref{E:padicimageofnormmaptothes}, there exist $b\in \langle \prootF^{es}\rangle$, $u\in \RR$ and $j\in \ZZ$ such that $a=b(1+u\varpi)(\prootF^{s(\ell+c)}\varpi^{fs})^{j}$.  Since $\val(a)=0$, we deduce that $j=0$, and since $a$ has order dividing $q-1$, $u=0$.  Therefore $\mu_n \cap \normKFim^s\subset \langle \prootF^{es}\rangle$, whence  $$\mu_n\cap \normKFim^s = \mu_n \cap \langle \prootF^{es}\rangle = \mu_n \cap \mu_{(q-1)/\gcd(es,q-1)}.$$
    Thus the intersection has order $\gcd\left(n,\frac{q-1}{\gcd(es,q-1)}\right)$.  We simplify this as
    $$\gcd\left(n,\frac{q-1}{\gcd(es,q-1)}\right)=
    \frac{\gcd(n\gcd(es,q-1),q-1)}{\gcd(es,q-1)}
    = \frac{\gcd(nes,q-1)}{\gcd(es,q-1)}.
    $$
    By definition of $t_s$, this order is equal to $n/t_s$.
    The final statement follows by noting that the group generated by $\prootF^{es}$ is also generated by $\prootF^{\gcd(es,q-1)}$ and that the group generated by $\prootF^{(q-1)/n}$ and $\prootF^{\gcd(es,q-1)}$ is generated by $\prootF^{\gcd((q-1)/n,es,q-1)}=\prootF^{\gcd(es, (q-1)/n)}$.
\end{proof}

\itemref{2}: \emph{A set of representatives $\mathcal{D}_s$ of the quotient $F^\times/\mu_n\normKFim^s$ (Lemma~\ref{L:FmunNormsrepresentatives}).}

\begin{proposition}\label{Proposition:Dspadic}
Let $K/F$ be cyclic of degree $n=ef$ such that $n\mid q-1$ and  $s=m/n$ with $\gcd(s,p)=1$.
Then a set of representatives for $F^\times/\mu_nN_{K/F}(K^\times)^s$ is given by
$$
\mathcal{D}_s=\left\{\prootF^{i}\varpi^{j}\, |\, 0\leq i<\gcd\left(es,\frac{q-1}{n}\right),\ 0\leq j<fs\right\},
$$
which has order $|\mathcal{D}_s|=fs \gcd\left(es,\frac{q-1}{n}\right)$.
\end{proposition}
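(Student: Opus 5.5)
Using Lemma~\ref{Lemma:tsandmunNorm}, write $H:=\mu_n\normKFim^s=\langle \prootF^{g}, 1+\PP_F, \prootF^{s(\ell+c)}\varpi^{fs}\rangle$ with $g:=\gcd(es,\frac{q-1}{n})$. The plan is to compute $F^\times/H$ directly through the product decomposition \eqref{E:productform}, $F^\times\cong \mu_{q-1}\times(1+\PP)\times\ZZ$. First we quotient by $1+\PP_F\subset H$. This reduces the problem to the group $\mu_{q-1}\times \langle\varpi\rangle\cong \ZZ/(q-1)\times \ZZ$, in which $H$ becomes the subgroup generated by $(g,0)$ and $(s(\ell+c), fs)$, written additively in exponents of $(\prootF,\varpi)$.

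Next I would show that the listed elements form a complete system of representatives. For existence, let $x=\prootF^{a}\varpi^{b}u$ with $u\in 1+\PP_F$. Write $b=fs\,k+j$ with $0\le j<fs$, and multiply $x$ by $(\prootF^{s(\ell+c)}\varpi^{fs})^{-k}\in H$. This puts the valuation in the range $[0,fs)$ while only changing the root-of-unity exponent to some $a'$. Since $g\mid q-1$, the group $\mu_{q-1}/\langle\prootF^g\rangle$ is cyclic of order $g$, represented by $\prootF^i$ with $0\le i<g$. Multiplying by a power of $\prootF^g$ therefore brings $x$ to the form $\prootF^i\varpi^j$ modulo $H$.

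For uniqueness, suppose $\prootF^{i}\varpi^{j}\equiv \prootF^{i'}\varpi^{j'}$ modulo $H$. Every element of $H$ has the form $\prootF^{gr+ks(\ell+c)}\varpi^{kfs}u$ with $u\in 1+\PP_F$. Comparing valuations gives $j-j'=kfs$, and since $|j-j'|<fs$ we get $k=0$ and $j=j'$. Then $\prootF^{i-i'}$ lies in $\langle\prootF^g\rangle(1+\PP_F)$. By uniqueness in the direct product decomposition \eqref{E:productform}, this forces $\prootF^{i-i'}\in\langle \prootF^g\rangle$, so $g\mid i-i'$ (using $g\mid q-1$), and hence $i=i'$. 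The order count $fs\cdot g$ then follows immediately.

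The main obstacle is bookkeeping rather than any conceptual step. One must check that the twist $\prootF^{s(\ell+c)}$ attached to the uniformizer generator does no harm. It is absorbed because we only need $\langle \prootF^g\rangle$-cosets after first fixing the valuation, and the uniqueness argument has $k=0$. One should also check that $g$ divides $q-1$, which holds since $g\mid \frac{q-1}{n}$, so that the representatives $\prootF^i$ for $0\le i<g$ are distinct cosets.
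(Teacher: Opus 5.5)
Your proposal is correct and follows essentially the same argument as the paper. Both start from Lemma~\ref{Lemma:tsandmunNorm} to describe $H=\mu_n\normKFim^s$. Both show every $x\in F^\times$ lies in some coset $\prootF^i\varpi^jH$ by first reducing the valuation modulo $fs$ using the generator $\prootF^{s(\ell+c)}\varpi^{fs}$, then reducing the root-of-unity exponent modulo $\gcd(es,\frac{q-1}{n})$. Both prove distinctness by comparing valuations to force $k=0$ and then using $\mu_{q-1}\cap(1+\PP_F)=\{1\}$; your preliminary passage to $\mu_{q-1}\times\langle\varpi\rangle$ is just a cleaner packaging of the same computation.
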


Note that the representative set $\mathcal D_s$ depends only on $e$, $f$, $s$, and $q$, and is independent of the sign $(-1)^{f(e-1)}$ or the particular choice of ramified extension given by the choice of $\ell$.

\begin{proof}
Let us set $g_s=\gcd(es,(q-1)/n))$ for brevity.
By Lemma~\ref{Lemma:tsandmunNorm}, we have $H:=\mu_n\normKFim^s = \langle \prootF^{g_s}, 1+\PP_F, \prootF^{s(\ell+c)}\varpi^{fs}\rangle$.  Then every element $h\in H$ can be written in the form
\begin{equation}\label{E:expressionforh}
h= a(1+u\varpi)(\prootF^{s(\ell+c)}\varpi^{fs})^k
\end{equation}
for some $a\in \langle \prootF^{g_s} \rangle$, $u\in \RR$ and $k\in \ZZ$.

We first show that the elements of $\mathcal{D}_s$ lie in distinct cosets.
Namely, if $\prootF^{i}\varpi^{j} \in \prootF^{i'}\varpi^{j'}H$ for some $(i,j),(i',j')$ satisfying $0\leq i,i'< g_s$ and $0\leq j,j'<fs$, then we have $\prootF^{i-i'}\varpi^{j-j'} =h \in H$. Writing $h$ as in \eqref{E:expressionforh}, we deduce by comparing valuations that $fsk=j-j'$.  As $|j-j'|<fs$, this implies $k=0$ and $j=j'$.  Since $\prootF^{i-i'}\in \mu_{q-1}$, which intersects $1+\PP_F$ trivially, this forces $u=0$ and $a=\prootF^{i-i'}$ so that $g_s|(i-i')$, whence $i=i'$.  Thus the cosets are distinct.

We next show every element of $F^\times$ lies in one of these cosets.  Let $x\in F^\times$; then there exist $t\in \ZZ$ and $u\in \RR$ such that $x=\prootF^t(1+u\varpi)\varpi^{\val(x)}$.  Set $j:=\val(x)\mod fs$ so that $0\leq j <fs$ and there exists $k\in \ZZ$ such that $\val(x)-j=fsk$; and set $i := t-sk(\ell+c) \mod g_s$, so that $0\leq i < g_s$ and there exists $t'\in \ZZ$ such that $t-i = sk(\ell+c)+t'g_s$.
Then we have
$$
\prootF^{-i}\varpi^{-j}x  (1+u\varpi)\varpi^{\val(x)-j}
=  \prootF^{s(\ell+c)k+t'g_s}(1+u\varpi)\varpi^{fsk}
= (\prootF^{g_s})^{t'}(1+u\varpi)(\prootF^{s(\ell +c)}\varpi^{fs})^k \in H.
$$
\end{proof}

\itemref{3}: \emph{For each $I\in \mathcal{P}$, with $|I|=k+1\geq 2$, a cyclic group $\Delta_{t_s,I}$ (Definition~\ref{D:DeltatsI}) of $F^{[\times k]}$.}

This is the subgroup
\begin{equation}\label{E:DeltatsIforpadic}
\Delta_{t_s,I} = \langle (\zeta^{t' (i_1-i_0)}, \zeta^{t'(i_2-i_0)},\ldots,  \zeta^{t'(i_k-i_0)}) \rangle \subset F^{[\times k]}
\end{equation}
where $t'=\frac{t_s}{\gcd(i_0,t_s)}$.

\begin{lemma}\label{le:4.5}
    We have $|\Delta_{t_s,I}|=\displaystyle   \frac{n/t'}{\gcd(i_1-i_0, i_2-i_0,\cdots, i_k-i_0,n/t')}.$
\end{lemma}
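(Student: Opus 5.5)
The group $\Delta_{t_s,I}$ in \eqref{E:DeltatsI} is cyclic, generated by
$$g=(\zeta^{t'(i_1-i_0)},\ldots,\zeta^{t'(i_k-i_0)}).$$
So its order is the order of $g$ in $F^{[\times k]}$. That order is the least common multiple of the orders of the individual coordinates, and each coordinate is a power of the fixed primitive $n$th root of unity $\zeta$.

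Set $\eta=\zeta^{t'}$. Since $t'=t_s/\gcd(i_0,t_s)$ divides $t_s$, and $t_s$ divides $n$, the element $\eta$ is a primitive $N$th root of unity with $N=n/t'$. Then the $j$th coordinate is $\eta^{d_j}$ with $d_j=i_j-i_0$, and its order is $N/\gcd(d_j,N)$.

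The order of $g$ is therefore
$$\operatorname{lcm}_j \frac{N}{\gcd(d_j,N)}.$$
To evaluate it, note that $g^r=1$ exactly when $N\mid r d_j$ for all $j$. This holds exactly when $\frac{N}{\gcd(d_j,N)}\mid r$ for all $j$. Equivalently, $g^r=1$ if and only if $\frac{N}{\gcd(d_1,\ldots,d_k,N)}\mid r$.

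To justify that last equivalence, I would work in the cyclic group $\ZZ/N\ZZ$. The condition $r d_j\equiv 0 \pmod N$ for all $j$ says that $r$ annihilates the subgroup generated by $d_1,\ldots,d_k$. That subgroup is generated by $\gcd(d_1,\ldots,d_k,N)$, so it has order $N/\gcd(d_1,\ldots,d_k,N)$. Hence $r$ annihilates it exactly when this order divides $r$. The lcm identity follows directly, and it gives
$$|\Delta_{t_s,I}|=\frac{n/t'}{\gcd(i_1-i_0,\ldots,i_k-i_0,n/t')}.$$

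The only potential subtlety is confirming that $\zeta^{t'}$ has order exactly $n/t'$. This holds because $t'\mid n$, which follows from $t'\mid t_s\mid n$. Given that, the computation is routine.
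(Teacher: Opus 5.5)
Your proof is correct and follows essentially the same route as the paper: both reduce the order of the generator to the order of a single power of $\zeta$ determined by a gcd. The only difference is that you factor out $t'$ first, passing to $\eta=\zeta^{t'}$ of order $n/t'$, whereas the paper computes $\gcd(t'(i_1-i_0),\ldots,t'(i_k-i_0),n)$ and then pulls out $t'$; your annihilator argument in $\mathbb{Z}/N\mathbb{Z}$ also justifies the gcd step, which the paper leaves implicit.
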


\begin{proof}
    The order of $(\zeta^{t' (i_1-i_0)}, \zeta^{t'(i_2-i_0)},\ldots,  \zeta^{t'(i_k-i_0)})$ is equal to that of
    $$
    \zeta^{\gcd(t'(i_1-i_0), t'(i_2-i_0), \ldots, t'(i_k-i_0), n)}.
    $$
    Since $t_s$ divides $n$, so does $t'=t_s/\gcd(i_0,t_s)$, and therefore
    $\gcd(t'(i_1-i_0), t'(i_2-i_0), \ldots, t'(i_k-i_0), n)=t'\gcd(i_1-i_0, i_2-i_0, \ldots, i_k-i_0, n/t')$. The formula follows.
\end{proof}

\itemref{4}: \emph{A set of representatives $\mathcal{E}_{s,I}$ for the quotient $F^{[\times k]}/\Delta_{t_s,I}$.}

Let us first give, in Algorithm~\ref{Algorithm:1}, a general algorithm for computing a set of representatives for $G/H$ where $G \cong (\ZZ/N\ZZ)^k$ and $H$ is a cyclic subgroup of $G$.

\begin{algorithm}
\caption{Generating a set of representatives for $(\ZZ/N\ZZ)^k/H$ where $H$ is cyclic. \label{Algorithm:1}}
\begin{algorithmic}[1]
\Input : $N,k\in \ZZ_{\geq 1}$,
$G = \ZZ/N\ZZ^k$, $a_1,a_2,\cdots, a_k\in \ZZ$, $H=\langle (a_1,\cdots, a_k)\rangle \subset G$
\EndInput
\Output : A set $E \subset G$ of representatives for $G/H$
\EndOutput
\State $M_1 \gets N/\gcd(a_1,N)$ \Comment{The order of the first term in $\ZZ/N\ZZ$}
\State $M\gets M_1$
\For{$i$ in $2$ to $k$}
    \State $H_i \gets H^M=\langle (0, \cdots, 0, M a_i, M a_{i+1},\cdots, M a_k)$
    \State $M_i \gets N/\gcd(Ma_i,N)$
    \Comment{The order of term $i$ in $H_i$}
    \State $M \gets M\times M_i$ \Comment{Accumulated order}
\EndFor
\State \Return $E \gets \{(c_1,\cdots,c_k) \mid \forall i, 0\leq c_i < N/M_i \}$
\end{algorithmic}
\end{algorithm}

Now observe that $F^{[\times k]} \cong \mu_{q-1}^k\times (1+\PP_F)^k \times \ZZ^k$ and that $\Delta_{t_s,I}\subset \mu_{q-1}^k\times \{1\}\times \{0\}$, so that, using \eqref{E:DeltatsIforpadic}, we have
\[
F^{[\times k]}/\Delta_{t_s,I}\cong
\mu_{q-1}^k/\langle(\zeta^{t'(i_1-i_0)},\zeta^{t'(i_2-i_0)},\dots,\zeta^{t'(i_k-i_0)})\rangle
\times (1+\mathcal{P}_F)^k\times \mathbb{Z}^k,
\]
where $t'=t_s/\gcd(i_0,t_s)$.

\color{black}

\begin{lemma}\label{lemma:Estpadic}
Let $E$ be the output of Algorithm~\ref{Algorithm:1}, upon input $N=q-1$, $k$,
    $a_j=t'(i_j-i_0)\frac{q-1}{n}$
    for $1\leq j\leq k$.  Then
    $$
E_{s,I}=\{(\xi_0^{c_1}x_1,\xi_0^{c_2}x_2,\dots,\xi_0^{c_k}x_k)\mid (c_1,\dots,c_k)\in E,\
x_1,\dots,x_k\in (1+\mathcal{P}_F)\langle\varpi\rangle\}
$$
is a set of representatives for the quotient $F^{[\times k]}/\Delta_{t_s,I}$.
\end{lemma}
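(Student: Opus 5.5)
The plan has two parts. First I reduce the quotient to a finite problem on the roots of unity. Then I verify that Algorithm~\ref{Algorithm:1} produces representatives for that finite problem.

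For the reduction, I use the decomposition \eqref{E:productform} coordinatewise. This gives $F^{[\times k]}\cong \mu_{q-1}^k\times (1+\PP_F)^k\times \ZZ^k$. Every element of $F^\times$ can be written uniquely as $\prootF^{c}x$ with $0\le c<q-1$ and $x\in(1+\PP_F)\langle\varpi\rangle$. The generator of $\Delta_{t_s,I}$ in \eqref{E:DeltatsIforpadic} lies in $\mu_{q-1}^k$. So the quotient $F^{[\times k]}/\Delta_{t_s,I}$ splits as the product displayed just before the lemma. Consequently, a set of representatives is obtained by taking arbitrary $(x_1,\dots,x_k)\in((1+\PP_F)\langle\varpi\rangle)^k$, together with $\mu_{q-1}$-parts $(\prootF^{c_1},\dots,\prootF^{c_k})$ ranging over representatives of $\mu_{q-1}^k/\Delta_{t_s,I}$.

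Next I translate to additive notation. Via $\prootF^{c}\mapsto c$ we have $\mu_{q-1}\cong \ZZ/(q-1)\ZZ$, and $\zeta=\prootF^{(q-1)/n}$. Under this identification the generator of $\Delta_{t_s,I}$ becomes $(a_1,\dots,a_k)$ with $a_j=t'(i_j-i_0)\frac{q-1}{n}$. It therefore suffices to show that the output $E$ of Algorithm~\ref{Algorithm:1} is a set of representatives for $(\ZZ/N\ZZ)^k/H$, where $H=\langle(a_1,\dots,a_k)\rangle$.

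The main work is proving correctness of the algorithm, which I would do by induction on the coordinates. Let $h=(a_1,\dots,a_k)$. The projection of $H$ to the first coordinate is $\langle a_1\rangle$, which has order $M_1$ and index $N/M_1$. The subgroup of $H$ vanishing in the first coordinate is $\langle h^{M_1}\rangle=H_2$. In the same way, the elements of $H$ vanishing in coordinates $1,\dots,i-1$ form $\langle h^{M_1\cdots M_{i-1}}\rangle$. Its projection to coordinate $i$ is generated by $Ma_i$ with $M=M_1\cdots M_{i-1}$, and has order $M_i$, so it is the unique subgroup of order $M_i$. That subgroup is $\frac{N}{M_i}\ZZ/N\ZZ$.

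Given any $g$, I reduce it step by step. At step $i$ I subtract a suitable element of $\langle h^{M}\rangle$ so that the $i$th coordinate lands in $[0,N/M_i)$. This is possible because $\langle Ma_i\rangle=\frac{N}{M_i}\ZZ/N\ZZ$, and subtracting such an element does not disturb the earlier coordinates. This gives existence. For uniqueness, suppose two elements of $E$ differ by an element of $H$. I show inductively that this difference vanishes coordinate by coordinate: it lies in $\langle h^M\rangle$, and its $i$th coordinate is a multiple of $N/M_i$ of absolute value less than $N/M_i$, hence zero.

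As a consistency check, $|E|=\prod N/M_i=N^k/|H|$, since $|H|=\prod M_i$ by the same filtration. I expect the only subtle point to be correctly identifying the projections with the cyclic subgroups $\frac{N}{M_i}\ZZ/N\ZZ$. Everything else is bookkeeping.
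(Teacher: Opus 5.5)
Your proposal is correct and follows essentially the same route as the paper. You reduce to $\mu_{q-1}^k/\Delta_{t_s,I}\cong(\ZZ/N\ZZ)^k/H$ via the decomposition $F^\times\cong\mu_{q-1}\times(1+\PP)\times\ZZ$, then use the filtration $H\supset H^{M_1}\supset H^{M_1M_2}\supset\cdots$ and the first-nonzero-coordinate argument for uniqueness. The one difference is that you prove existence directly by coordinate-wise reduction, whereas the paper gets it from injectivity plus the count $|E|=N^k/|H|$; either way works.
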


\begin{proof}
Let $N=q-1$.   The map sending $c\in \ZZ/N\ZZ$ to $\zeta_0^c\in \mu_{q-1}$ is an isomorphism,
 and since $\zeta=\xi_0^{(q-1)/n}$, the image of the subgroup $\Delta_{t_s,I}$ is the
cyclic subgroup $H=\langle(a_1,\dots,a_k)\rangle$ with $a_j=t'(i_j-i_0)\frac{q-1}{n}$. By
\eqref{E:productform}, $F^\times=\mu_{q-1}\times(1+\mathcal{P}_F)\times \langle\varpi\rangle$, and
$\Delta_{t_s,I}$ lies in the factor $\mu_{q-1}^k$ of $F^{[\times k]}$.

Thus it suffices to prove that Algorithm~\ref{Algorithm:1} is correct, that is, that its output $E$ is in bijective correspondence with the cosets of $G/H$.  Adopt the notation of the algorithm.

The algorithm produces a set $E$ with  $M=\prod_{i=1}^kM_i$ elements.  Noting that
    $$
    M_i = \frac{N}{\gcd(\prod_{j=1}^{i-1}M_ja_i,N)} = \frac{N/\prod_{j=1}^{i-1}M_j}{\gcd(a_i,N/\prod_{j=1}^{i-1}M_j)},
    $$
    it is direct to prove by induction that
    $$
    M_i = \frac{\gcd(a_1,\cdots, a_{i-1},N)}{\gcd(a_1,\cdots, a_{i},N)},
    $$
    so that the total order $M=\prod_{j=1}^kM_j$ of $H$ is $M=N/\gcd(a_1,\cdots, a_k,N)$, as expected.  The number of elements of $E$ is
    $$
    \prod_{j=1}^k\frac{N}{M_j} = \frac{N^k}{M},
    $$
    which agrees with the number of cosets of $H$ in $G$.

    Now suppose that two elements $c=(c_1,\cdots, c_k), c'=(c_1',\cdots, c_k')\in E$ lie in the same coset of $H$, that is, $(c_1-c_1', \cdots, c_k-c_k')\in H$. Let $i$ be the index of the first nonzero term $c_i-c_i'$, if it exists.  Then since $\tilde{M}_i=\prod_{j<i} M_i$
    is the order of the projection of $H$ onto its first $i-1$ factors, we deduce that $c-c'\in H^{\tilde{M}_i}$, whence $c_i-c_i'\in \gcd(\tilde{M}_i a_i, N)\ZZ$; but this is impossible as $0\leq c_i,c_i'< N/M_i=\gcd(\tilde{M}_i a_i, N)$.  Thus $c=c'$, whence $E$ parametrizes the cosets of $H$ in $G$.
\end{proof}

\itemref{5}: \emph{Subsets $K(t_s,I;a_{i_0})$ of $K(I)$} are defined as in Definition~\ref{D:KtIa}, where $\mathcal{E}_{s,I}$ is as in Lemma~\ref{lemma:Estpadic}.

\itemref{6}: \emph{The parametrizing set $\mathcal{S}_s(K)$}.

Using the preceding, we can simplify the expression for $\mathcal{S}_s(K)$, as we give in our final summarizing theorem.

\begin{theorem} \label{Main:padic}
Suppose $F$ is a nonarchimedean local field with residual characteristic $p$ and residue field of order $q$ such that $\mu_{q-1}=\langle \prootF\rangle$.  Let $K/F$ be a cyclic extension of degree $n=ef$ where $e$ is its ramification degree, such that $n|(q-1)$ and $(-1)^{f(e-1)}\in \mu_{q-1}$, and suppose $\Gal(K/F)=\langle \sigma \rangle$.  Let  $m\in \ZZ$ such that $n|m$ and $\gcd(m,p)=1$ and set $s=m/n$.  Let $\gcdesn=\gcd(es,\frac{q-1}{n})$, $\Thetaes=\gcd(es,q-1)$ and $L=\gcd(i_0, \Thetaes/\gcdesn)$.

 Then for every $b\in K^\times$ there exists a unique $a\in \mathcal{S}_s(K)$ where
 $$
    \mathcal{S}_s(K)= \bigcup_{\substack{I\in \mathcal{P}\smallsetminus\{\emptyset\}\\I=\{i_0<i_1<\cdots<i_k\}}}
    \bigcup_{i=0}^{\gcdesn-1}
    \bigcup_{j=0}^{fs-1}
\bigcup_{\ell=0}^{L-1} K\left(\tfrac{[es]_{q-1}}{[es]_{\frac{q-1}{n}}}, I; \xi_0^{\,i+\frac{\ell(q-1)}{n}}\varpi^{j}\right)
$$
such that $\mathbb{S}_{t^m-b}$
is isomorphic to
$\mathbb{S}_{t^m-a}$
via an isomorphism of the form $G_{\tau,\alpha}$ for some $\tau\in \Gal(K/F)$ and $\alpha\in K^\times$.
\end{theorem}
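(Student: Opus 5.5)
The theorem will follow by specializing Theorem~\ref{Main:general} with each ingredient \itemref{1}--\itemref{6} made explicit. First I will check that the hypotheses of Theorem~\ref{Main:general} hold. Since $n\mid(q-1)$, $F$ contains the primitive $n$th root of unity $\zeta=\prootF^{(q-1)/n}$. Since $\gcd(m,p)=1$, also $\gcd(s,p)=1$, so \eqref{E:padicimageofnormmaptothes} is available. It then remains to identify $t_s$, $\mathcal{D}_s$, the ranges of the inner unions and the sets $\mathcal{E}_{s,I}$ in Definition~\ref{D:SsK} with the quantities in the displayed formula.

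For $t_s$, Lemma~\ref{Lemma:tsandmunNorm} gives $t_s=n\gcd(es,q-1)/\gcd(nes,q-1)$. I will rewrite this as $\Thetaes/\gcdesn$ using a gcd identity. Write $g=\gcd(es,q-1)$. Then $\gcd(nes,q-1)/g=\gcd(n,(q-1)/g)$, so $t_s=n/\gcd(n,(q-1)/g)$. On the other side, $\gcdesn=\gcd(es,(q-1)/n)=\gcd(g,(q-1)/n)$. Comparing the two expressions prime by prime via valuations should yield $n\gcd(g,(q-1)/n)=g\gcd(n,(q-1)/g)$, which is the identity needed. I expect this bookkeeping to be the main, if modest, obstacle. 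With it, the first argument of $K(\cdot,I;\cdot)$ in the statement is $t_s$, and $L=\gcd(i_0,t_s)$ matches the range $0\le\ell<\gcd(i_0,t_s)$ in Definition~\ref{D:SsK}.

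For $\mathcal{D}_s$, Proposition~\ref{Proposition:Dspadic} gives the representatives $\prootF^{i}\varpi^{j}$ with $0\le i<\gcdesn$ and $0\le j<fs$. The factor $d\zeta^\ell$ in Definition~\ref{D:SsK} then becomes $\prootF^{i}\varpi^j\prootF^{\ell(q-1)/n}=\prootF^{\,i+\ell(q-1)/n}\varpi^j$, which is exactly the subscript appearing in the statement. For the sets $\mathcal{E}_{s,I}$ entering $K(t_s,I;\cdot)$, I will take the representatives produced in Lemma~\ref{lemma:Estpadic} (with Lemma~\ref{le:4.5} as a check on the count). Any valid choice of representatives is permitted by Definition~\ref{D:KtIa}, so this choice is legitimate.

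Substituting all of these into Definition~\ref{D:SsK} reproduces the union in the statement term by term. Existence and uniqueness of $a\in\mathcal{S}_s(K)$ with $\mathbb{S}_{t^m-b}\cong\mathbb{S}_{t^m-a}$ via some $G_{\tau,\alpha}$ is then precisely the conclusion of Theorem~\ref{Main:general}. The hypothesis $(-1)^{f(e-1)}\in\mu_{q-1}$ is used only so that $c$ in \eqref{E:padicimageofnormmaptothes} is defined, and that is an input to Lemma~\ref{Lemma:tsandmunNorm} and Proposition~\ref{Proposition:Dspadic}.
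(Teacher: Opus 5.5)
Your proposal is correct and follows the paper's proof: you specialize Theorem~\ref{Main:general}, taking $t_s$ from Lemma~\ref{Lemma:tsandmunNorm}, $\mathcal{D}_s$ from Proposition~\ref{Proposition:Dspadic}, $\zeta=\prootF^{(q-1)/n}$ to rewrite $d\zeta^\ell$, and $\mathcal{E}_{s,I}$ from Lemma~\ref{lemma:Estpadic}. The gcd identity you flag as the main obstacle (which the paper leaves implicit) needs no prime-by-prime comparison: since $n\mid(q-1)$ we have $\gcd(nes,q-1)=n\gcd(es,\tfrac{q-1}{n})$, so $t_s=\Thetaes/\gcdesn$ follows directly from Lemma~\ref{Lemma:tsandmunNorm}.
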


\begin{proof}
    By Theorem~\ref{Main:general}, these classes are parametrized by the set $\mathcal{S}_s(K)$ of Definition~\ref{D:SsK}.  By Lemma~\ref{Lemma:tsandmunNorm} we have $t_s=\Thetaes/\gcdesn$ so
    $0\leq \ell < \gcd(i_0,t_s)=\gcd(i_0, \Thetaes/\gcdesn)=L$.
    By Proposition~\ref{Proposition:Dspadic},
    $\mathcal{D}_s = \{\prootF^i\varpi^j \mid 0\leq i < \gcdesn, 0\leq j < fs\}$, so the union over $d\in \mathcal{D}_s$ may be replaced by a union over $i$ and $j$ separately.  Finally, as identified before Lemma~\ref{Lemma:tsandmunNorm}, we may take $\zeta = \prootF^{(q-1)/n}$ so the expression $d\zeta^\ell$ simplifies to  $\prootF^{i+\frac{\ell(q-1)}{n}}\varpi^j$.  Thus the set in the statement of the theorem is $\mathcal{S}_s(K)$, as claimed.
\end{proof}

\subsection{An application to number fields}\label{sec:number fields}

Let $F$ be a number field that is, a finite algebraic extension of $\mathbb{Q}$) containing a primitive $n$th root of unity $\zeta$ and let $K/F$ be
cyclic of degree $n$ with $\Gal(K/F)=\langle\sigma\rangle$.

Let $v$ be a finite place of $F$ such that there is exactly one place $w$ of $K$ above $v$. Let $K_w$ and $F_v$ denote the corresponding completions of $K$ and $F$, respectively.  If $p\in \ZZ$ is the prime number such that $v|p$ then $K_w$ and $F_v$ are $p$-adic fields of residual characteristic $p$.  Then $K_w=K\otimes_F F_v$ is a cyclic field extension of $F_v$ of degree $n$ and  $\sigma$ extends uniquely to a generator $\sigma_w=\sigma \otimes \id_{F_v}$ of $\mathrm{Gal}(K_w/F_v)$.
These maps induce a
canonical isomorphism
\[
(K[t;\sigma]/K[t;\sigma](t^m-a))\otimes_F F_v\cong
K_w[t;\sigma_w]/K_w[t;\sigma_w](t^m-a).
\]
of  $F_v$-algebras, for any $a\in K$.

For any $\tau\in \Gal(K/F)$, let  $\tau_w=\tau \otimes \id_{F_v}$.

\begin{proposition}\label{P:numberfields}
Let $m=ns$ and $a\in K^\times$.
\begin{enumerate}[(i)]
\item If  $a\simeq_F b$ then $a\simeq_{F_v} b$.
\item If  $v\nmid m$ and $a\simeq_F b$, then $a$ and $b$ are represented by
the same element of $\mathcal{S}_s(K_w)$.
\end{enumerate}
\end{proposition}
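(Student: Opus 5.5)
Part (i) follows by extending scalars. Suppose $a\simeq_F b$. By the definition of $\simeq$, there exist $\tau\in\Gal(K/F)$ and $\alpha\in K^\times$ with $\tau(a)=N_{K/F}(\alpha)^s b$. Embed $K\hookrightarrow K_w$ and $F\hookrightarrow F_v$.

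Since $w$ is the unique place above $v$, the restriction $\Gal(K_w/F_v)\to\Gal(K/F)$ is an isomorphism. Under it, $\tau$ corresponds to $\tau_w$, so $\tau_w(a)=\tau(a)$ for $a\in K$.

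The norm is also compatible with completion: for $\alpha\in K$ we have $N_{K_w/F_v}(\alpha)=N_{K/F}(\alpha)$. This holds because $K\otimes_F F_v\cong K_w$ is a field, and the norm is the determinant of multiplication by $\alpha$, which is unchanged by base change. Hence
\[
\tau_w(a)=N_{K_w/F_v}(\alpha)^s b
\]
with $\alpha\in K_w^\times$ and $\tau_w\in\Gal(K_w/F_v)$. This is exactly $a\simeq_{F_v} b$ in the sense of the notation after Theorem~\ref{T:parametrizingnm}, applied to the cyclic extension $K_w/F_v$ with generator $\sigma_w$.

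Equivalently, the $F$-equivalence $G_{\tau,\alpha}$ tensors with $F_v$ to the $F_v$-equivalence $G_{\tau_w,\alpha}$ via the canonical isomorphism recorded just before the proposition.

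For (ii), I would apply Theorem~\ref{Main:padic} to $K_w/F_v$. Its hypotheses need checking:
\begin{itemize}
\item $F_v$ contains $\zeta$, so $n\mid q-1$, where $q$ is the residue cardinality. Indeed $p\nmid n$ is forced by $v\nmid m$ and $n\mid m$, and Hensel's lemma then shows that $\mu_n\subset F_v$ reduces injectively into $\mathfrak f^\times$.
\item $\gcd(m,p)=1$ is exactly the hypothesis $v\nmid m$.
\item $(-1)^{f(e-1)}\in\mu_{q-1}$ holds automatically, by the remark after Lemma~\ref{lemma:imageofnormmappadic}.
\end{itemize}

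Theorem~\ref{Main:padic} (equivalently Theorem~\ref{Main:general} over $F_v$) then says that each $\simeq_{F_v}$-class of $K_w^\times$ contains a unique element of $\mathcal{S}_s(K_w)$. By (i), $a\simeq_{F_v}b$, so $a$ and $b$ lie in the same class and hence have the same representative.

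One compatibility point needs care: the Kummer basis over $F_v$ can be taken to be the same $\beta$, with $\sigma_w(\beta)=\zeta\beta$. This makes $\mathcal{S}_s(K_w)$ well defined relative to $\sigma_w$.

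I expect the main obstacle to be justifying these local identifications rigorously: that the decomposition group is all of $\Gal(K/F)$ because $w$ is unique, that norms commute with completion, and that $n\mid q-1$. The rest is formal.
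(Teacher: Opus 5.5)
Your proposal is correct and follows essentially the paper's proof. Part (ii) is the same argument: check $p\nmid n$ and $n\mid q_v-1$, then combine (i) with Theorem~\ref{Main:padic}. In (i) you verify the norm relation directly, using that the decomposition group is all of $\Gal(K/F)$ and that $N_{K_w/F_v}$ restricts to $N_{K/F}$ on $K$, whereas the paper base-changes $G_{\tau,\alpha}$ to $F_v$ and identifies it with $G_{\tau_w,\alpha}$. You note that route yourself, so this is only a cosmetic difference.
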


\begin{proof}
$(i)$ Suppose $G_{\tau,\alpha}: K[t;\sigma]/K[t;\sigma](t^m-a)\to
K[t;\sigma]/K[t;\sigma](t^m-b)$ is an $F$-equivalence.  Then $G_{\tau,\alpha}\otimes\mathrm{id}_{F_v}$ is an isomorphism of $F_v$-algebras, and it maps $ct^i$ to $\tau_w(c)N^{\sigma_w}_i(\alpha)t^i$
for all $c\in K_w$. Hence this coincides with the map
$G_{\tau_w,\alpha}:K_w[t;\sigma_w]/K_w[t;\sigma_w](t^m-a)\to
K_w[t;\sigma_w]/K_w[t;\sigma_w](t^m-b)$, which is thus an $F_v$-equivalence.

$(ii)$  If  $v\nmid m$, then $K_w/F_v$ satisfies the hypotheses of
Theorem~\ref{Main:padic}.
 Let $p$ be the residual characteristic of $F_v$ and $q_v$ the order of its residue
field. As $n\mid m$ and $v\nmid m$, we have $p\nmid m$ and $p\nmid n$. Since
$\mu_n\subset F\subset F_v$ and $p\nmid n$, the subgroup $\mu_n$ lies in the Teichm\"uller
subgroup $\mu_{q_v-1} \subset F^\times_v$, so $n\mid(q_v-1)$. The  assertion now follows from $(i)$ and
Theorem~\ref{Main:padic}.
\end{proof}

Indeed we know more.  Suppose the Kummer extension $K/F$ is given by $K=F(\beta)$ with $\beta\in K$ as in Section~\ref{SS:Kummer}.  Then for every $(v,w)$ as above, the Kummer basis
$\{1,\beta,\dots,\beta^{n-1}\}$ of $K/F$ is also one of $K_w/F_v$, satisfying $\sigma_w(\beta)=\sigma(\beta)=\zeta\beta$. Note further that for each $I\in \mathcal{P}$ we have $K(I)\subset K_w(I)$.

Now suppose that $a,b\in K\smallsetminus F$ satisfy $a\simeq_F b$. Then by Lemma~\ref{IJdistinct} there exists some $I=\{i_0<i_1<\cdots<i_k\}\in \mathcal{P}$ such that $a,b\in K(I)$.  Write
$a=\sum_{j=0}^k a_{i_j}\beta^{i_j}$ and $b=\sum_{j=0}^k b_{i_j}\beta^{i_j}$ in $K(I)$, for some $a_{i_j},b_{i_j}\in F^\times$.
Then
Proposition~\ref{P:numberfields}$(ii)$ yields the following additional necessary conditions, for  every finite place $v\nmid m$ of $F$ with a unique
place $w$ of $K$ above it: If $a\simeq_F b$ then
\begin{itemize}
\item[(a)] $a_{i_0}$ and $b_{i_0}$ lie in the same coset of
$\langle\zeta^{\gcd(i_0,t_s)}\rangle N_{K_w/F_v}(K_w^\times)^s$ in $F_v^\times$; and
\item[(b)] after replacing $b$ by $\sigma_w^z(b)\nu$ with $\nu\in N_{K_w/F_v}(K_w^\times)^s$ so
that its $\beta^{i_0}$-coefficient equals $a_{i_0}$, the vectors
$(a_{i_1},\dots,a_{i_k})$ and $(b_{i_1},\dots,b_{i_k})$ lie in the same coset of
$\Delta_{t_s,I}$ in $(F_v^\times)^{[\times k]}$.
\end{itemize}
Here, $t_s$ and $\Delta_{t_s,I}$ are computed for $K_w/F_v$ by Lemmas~\ref{Lemma:tsandmunNorm} and
\ref{le:4.5}, and so depend on $v$.

\begin{remark}
 There are infinitely many places $v$ with a unique place $w$ of $K$. By
Proposition~\ref{P:numberfields} we now have a computable obstruction for  $a\simeq_F b$: if $a$ and $b$ are
represented by different elements of $\mathcal{S}_s(K_w)$ for one such $v\nmid m$, then
$K[t;\sigma]/K[t;\sigma](t^m-a)$ and $K[t;\sigma]/K[t;\sigma](t^m-b)$ are not
$F$-equivalent, and for $m=n$ they are not isomorphic.
\end{remark}


\providecommand{\bysame}{\leavevmode\hbox to3em{\hrulefill}\thinspace}
\providecommand{\MR}{\relax\ifhmode\unskip\space\fi MR }
\providecommand{\MRhref}[2]{%
  \href{http://www.ams.org/mathscinet-getitem?mr=#1}{#2}
}
\providecommand{\href}[2]{#2}

\end{document}